\pdfoutput=1
\documentclass[11pt]{article}

\usepackage[a4paper,margin=1in]{geometry}
\usepackage[T1]{fontenc}
\usepackage{lmodern}
\usepackage{xcolor}
\usepackage{amsmath}
\usepackage{amssymb}
\usepackage{amsthm}
\usepackage{multirow}
\usepackage{tikz-cd}
\usepackage{subcaption}
\usepackage{algorithm}
\usepackage{algpseudocode}
\usepackage{Macros/mydef}
\usepackage{bbm}
\usepackage{ulem}
\usepackage{todonotes}
\usepackage{xspace}
\usepackage{comment}
\usepackage[colorlinks=true,linkcolor=blue,citecolor=blue,urlcolor=blue]{hyperref}
\usepackage[nameinlink,capitalise]{cleveref}

\newtheorem{theorem}{Theorem}[section]
\newtheorem{proposition}[theorem]{Proposition}

\theoremstyle{remark}
\newtheorem{remark}[theorem]{Remark}

\crefname{hypothesis}{Hypothesis}{Hypotheses}
\crefname{fact}{Fact}{Facts}

\newcommand{\mred}[1]{#1}

\newenvironment{keyword}{\par\smallskip\noindent\textbf{Keywords: }}{\par\smallskip}
\newenvironment{MSCcodes}{\par\noindent\textbf{MSC classifications: }}{\par\medskip}

\title{A second-order structure- and positivity-preserving convex limiting method for the Vlasov equations}
\author{Katharina Kormann\thanks{Department of Mathematics, Ruhr-Universität Bochum, Universitätsstraße 150, D-44801 Bochum, Germany. Email: \texttt{k.kormann@rub.de}.}
  \and Murtazo Nazarov\thanks{Division of Scientific Computing, Department of Information Technology, Uppsala University, Uppsala 751 05 Sweden. Email: \texttt{murtazo.nazarov@uu.se}.}
  \and Junjie Wen\thanks{Corresponding author. Division of Scientific Computing, Department of Information Technology, Uppsala University, Uppsala 751 05 Sweden. Email: \texttt{junjie.wen@it.uu.se}.}}
\date{}
\usepackage{amsopn}

\begin{document}

\maketitle
\begin{center}\small
Authors are listed alphabetically. This work was supported by the Swedish Research Council (VR), grants 2021-04620 and 2021-05095, and by the German Research Foundation (DFG), project 530709913.
\end{center}

%
%
%

\begin{abstract}
  In this paper, we introduce a novel second-order, positivity-preserving finite element method for the Vlasov equations using a convex limiting algorithm. The method employs \mred{strong-stability-preserving (SSP) Runge--Kutta} time integration and a tensor-product construction of the phase-space mesh for efficient high-dimensional implementations. The convex limiting algorithm combines the robust first-order positivity-preserving graph viscosity approach with high-order residual-based viscosity stabilization to obtain a high-order positivity-preserving scheme. Both novel first-order and high-order methods applicable to high-dimensional problems such as the Vlasov system are presented. In addition, we propose a divergence-cleaning technique for Maxwell’s equations to ensure that the divergence constraints of the electromagnetic fields are satisfied. Numerical experiments are provided to demonstrate the accuracy and robustness of the proposed methods.
\end{abstract}



\begin{keyword}
  Vlasov equations; Structure-preserving; Discrete maximum principle; Residual-based viscosity; Finite element methods.
\end{keyword}

\begin{MSCcodes}
  65M60, 35L65, 35Q83
\end{MSCcodes}

\section{Introduction}
The Vlasov equation provides a kinetic description of the evolution of plasma under external and self-consistent electromagnetic fields.  Obtaining accurate numerical solutions to the Vlasov equations is a challenging task, as it is a high-dimensional advection equation nonlinearly coupled with Maxwell's equations. In particular, the curse of dimensionality poses severe computational challenges, especially in phase spaces beyond the classical 1D1V and 1D2V settings. Since the Vlasov equation is purely advection-dominated, its finite element approximation requires high-order accuracy, robust stabilization techniques, and computational efficiency. In addition to numerical performance, it is also crucial to preserve the physical properties of the Vlasov--Maxwell systems, including the conservation of total mass, energy, and momentum; the divergence constraints on the electromagnetic fields, as well as the positivity- and asymptotic-preserving properties of the distribution function. Preserving all of these properties simultaneously is highly challenging, but various techniques can be employed to recover selected properties depending on the requirements of a given problem.

Positivity preserving is an essential property of the Vlasov equation, as it describes the evolution of the distribution function of particles, which must remain non-negative, and this property is extremely important for collisional plasma simulation, see \eg  \cite{MR2100514}. Positivity preserving for the Vlasov equation can be achieved using various reconstruction methods and techniques, with one of the earliest investigations reported in \cite{MR1852326}. The semi-Lagrangian positivity-preserving (SL-PP) framework is widely used for the Vlasov--Poisson equations, and it is often combined with other spatial discretizations, including discontinuous Galerkin (DG), finite difference (FD), and spectral volume (SV) methods, see \eg \cite{MR2806222, MR2843721, MR4329985, MR3227274, zhang2025positivitypreservinghighordersemilagrangianspectral}. Recently, a positivity-preserving framework based on the active flux method was proposed for the Vlasov--Poisson equations \cite{MR4844786}, in which a flux limiter is incorporated to enforce positivity for 1D1V Vlasov--Poisson equations. In addition, several other positivity-preserving approaches have been developed for the Vlasov equations, including the symmetrically weighted square-root formulation \cite{MR4773545}. 

High-order discontinuous Galerkin (DG) methods, in particular those based on the framework of \cite{Zhang_Shu_2011}, provide a systematic approach for constructing positivity-preserving schemes for conservation laws and have been successfully applied to Vlasov--Poisson \cite{MR2806222,MR2843721} and Vlasov--Boltzmann transport equations \cite{MR2833491}. However, despite their flexibility and high-order accuracy, DG methods typically involve a significantly larger number of degrees of freedom per element compared to continuous finite element methods, leading to increased memory usage and computational cost. This limitation becomes especially pronounced in high-dimensional phase space, where fully resolved simulations rapidly become prohibitively expensive. Recent developments such as sparse-grid and low-rank DG methods, see \eg \cite{Schnake_et_al_2024, MR3566908, MR4710847, MR4728768, MR4925872} and references therein, aim to mitigate these costs, but robust positivity-preserving and structure-preserving formulations in these settings are still in their infancy \cite{Ye_Loureiro_2024}. Moreover, these methods require certain structural properties of the solution and the computational domain which can be found in many application but not all.

To the best of our knowledge, there currently exists no continuous finite element approximation of the Vlasov system for which positivity preservation has been rigorously established. This lack of robust positivity-preserving frameworks has limited the applicability of continuous finite element methods for kinetic plasma simulations, despite their attractive properties, including a more compact representation and improved memory efficiency.

High-order positivity-preserving schemes typically rely on three essential components: a robust first-order positivity-preserving method, a high-order (preferably entropy-convergent) discretization, and a suitable conservative limiting algorithm. While these components have been successfully combined in DG and finite difference frameworks, their extension to continuous finite element methods has remained challenging. In particular, the absence of a robust first-order positivity-preserving scheme, analogous to the Lax--Friedrichs method, has been a key obstacle in transferring these techniques to the finite element setting.

Recent developments in \cite{Guermond_Nazarov_2014, Guermond_Popov_2017} introduced a graph-based viscosity formulation, which can be interpreted as a finite element analogue of the Lax--Friedrichs scheme, and established robust first-order positivity-preserving methods for general nonlinear conservation laws within the continuous finite element framework. For the high-order component, artificial viscosity can be constructed using either entropy viscosity (EV) \cite{MR2787948} or residual-based viscosity (RV) \cite{Nazarov_2013}. The RV approach is particularly attractive, as convergence to the entropy solution has been established for general scalar conservation laws \cite{Nazarov_2013}. A limiting procedure, such as the classical flux-corrected transport (FCT) method \cite{Boris_books_JCP_1973,Zalesak_1979}, or \textit{convex limiting} \cite{Guermond_etal_2018}, can then be employed to obtain high-order positivity-preserving schemes; see, for example, \cite{Guermond_etal_2014}.

The schemes in \cite{Guermond_Nazarov_2014, Guermond_Popov_2017} employ lumped mass matrices, making them computationally feasible for high-dimensional problems. However, extending residual-based viscosity methods such as \cite{Nazarov_2013} to high-dimensional settings remains challenging. In particular, the residual is typically constructed using a finite element projection, which introduces significant computational overhead in high dimensions. Moreover, time derivatives are often approximated using backward differentiation formulas, requiring storage of multiple solution states from previous time steps. One possible alternative is to replace the time derivative with a spatial approximation of the flux term; see, for example, \cite{MR3998292,MR4208958}. However, even with this modification, the associated projection problem can remain computationally expensive in high-dimensional settings.

In this work, we address these limitations by proposing a residual construction that is specifically tailored to tensor-product finite element spaces and avoids the need to store the solutions at multiple time levels. This feature is particularly important for high-dimensional phase-space simulations, where memory efficiency is a critical concern.

For the Vlasov--Maxwell system, Gauss’s law can be shown to hold at the continuous level as a consequence of the charge continuity equation. Consequently, it also holds at the discrete level provided that Maxwell’s equations are discretized consistently with the Vlasov equation, as discussed in~\cite{Kraus_Kormann_Morrison_Sonnendruecker_2017,KORMANN2021109890,MR4804196}. However, several techniques employed in practice, including artificial viscosity, mass lumping, and convex limiting, may violate the discrete continuity equation and thereby destroy the discrete validity of Gauss’s law.

To restore Gauss’s law, di\-ver\-gen\-ce-cleaning techniques may be applied. Such approaches are widely used in magnetohydrodynamics, where maintaining the di\-ver\-gen\-ce-free constraint of the magnetic field is nontrivial. Divergence cleaning explicitly removes spurious divergence errors from the computed fields, thereby eliminating nonphysical components, see~\cite[Sec.~3.2.1]{MR4456197}. The same strategy can be applied in the Vlasov--Maxwell setting to enforce Gauss’s law.

In this manuscript, we introduce a positivity-preserving finite element framework for approximating the Vlasov equations. Our goal is to preserve positivity while maintaining high-order accuracy. Two types of artificial viscosity are proposed: a low-order viscosity that strictly enforces the maximum principle, and a high-order viscosity based on the residual of the Vlasov equation. Convex limiting combines these schemes, yielding a method that preserves the maximum principle and retains high-order accuracy. 

We employ a tensor-product construction of the finite element spaces, as in \cite{MR4321466,MR4945433}, and apply the schemes in up to four dimensions. The proposed residual formulation is designed to be compatible with this tensor-product structure and to reduce computational overhead in high dimensions. For the Vlasov--Maxwell equations, we introduce a divergence-cleaning technique to enforce Gauss’s law in the weak form. 

The key contribution of this work is to demonstrate that continuous finite element methods can be equipped with a provably positivity-preserving and structure-preserving framework for Vlasov-type problems. This establishes finite element methods as a viable and computationally efficient alternative to DG methods in high-dimensional settings, where memory efficiency and scalability are critical. \mred{The present realization uses piecewise multilinear elements and is therefore second-order accurate in phase space; this restriction is not intrinsic to the underlying low-order/high-order limiting strategy. One possible extension uses the closest-neighbor reconstruction techniques developed in \cite{Abgrall_2017, Guermond_et_al_2024}.} We also note that, for certain plasma benchmarks, second-order methods can exhibit competitive or even superior performance due to improved robustness, as illustrated in \cite[Figure~3(b)]{MR4907486}.

This manuscript is organized as follows: Section~\ref{section:Pre} is a review, introducing the Vlasov equations and the tensor-product finite element method. In Section~\ref{section:pp}, we present a positivity-preserving finite element framework for advection equations, and in this section we propose new approaches for computing the residuals. Section~\ref{section:FEMMaxwell} describes finite element schemes for Maxwell's equations, including a divergence-cleaning technique. All numerical results are presented in Section~\ref{section:experiment}. Section~\ref{section:conclu} concludes with a summary of the work.

\section{Preliminaries}\label{section:Pre}
In this section, we introduce the governing equations of Vla\-sov--Maxwell, and a tensor-product finite element method for approximating the Vlasov equation.

\subsection{Governing equations}
The Vlasov equation with appropriate boundary conditions and initial data is given by
\begin{equation}\label{eq:vm}
  \partial_t f_s +\pmb{v}\cdot\nabla_{\pmb{x}}f_s+\frac{q_s}{m_s}(\pmb{E}+\pmb{v}\times\pmb{B})\cdot\nabla_{\pmb{v}}f_s=0,
\end{equation}
where $f_s(\pmb{x},\pmb{v},t)$ is the distribution function of the particle species $s$ with charge $q_s$ and mass $m_s$, at position $\pmb{x}\in\mathbb{R}^{ {d_x}}$ with velocity $\pmb{v}\in\mathbb{R}^{ {d_v}}$ at time $t\geq0$, here  {$d_x$ and $d_v$ are} the space dimensions.
The electric field $\pmb{E}(\pmb{x},t)$ and magnetic field $\pmb{B}(\pmb{x},t)$ are characterized by the following Maxwell's equations 
\begin{equation}
  \begin{aligned}
    \frac{1}{c^2}\partial_t \pmb{E} &= \nabla_{\pmb{x}}\times \pmb{B} - \mu_0\pmb{J},&\qquad\nabla_{\pmb{x}}\cdot \pmb{E} &= \frac{\rho}{\epsilon_0},\\
    \partial_t \pmb{B} &= -\nabla_{\pmb{x}}\times \pmb{E}, &\qquad\nabla_{\pmb{x}}\cdot \pmb{B} &= 0,
  \end{aligned}
\end{equation}
where $c = \frac{1}{\sqrt{\epsilon_0\mu_0}}$ is the speed of light in vacuum, $\epsilon_0$ is the permittivity, and $\mu_0$ is the permeability. Throughout the remainder of this manuscript, these constants are assumed to have unit value for simplicity. Here, $\pmb{J}(\pmb{x},t)$ is the current density and $\rho(\pmb{x},t)$ is the charge density, defined as
\begin{equation}
  \pmb{J}(\pmb{x},t) = \sum_{s}q_s\int_{\mathbb{R}^{ {d_v}}} \pmb{v} f_s(\pmb{x},\pmb{v},t) \, {\rm d}\pmb{v},\qquad \rho(\pmb{x},t) = \sum_s q_s\int_{\mathbb{R}^{ {d_v}}} f_s(\pmb{x},\pmb{v},t) \, {\rm d}\pmb{v}.
\end{equation}

The above system is called Vlasov--Maxwell, and one simplification of it is the Vlasov--Poisson system, which is given by disregarding the magnetic field $\pmb{B}$ in \eqref{eq:vm}, and computing the electric field $\pmb{E}$ from the Poisson equation:
\begin{equation}
  \pmb{E} = -\nabla_{\pmb{x}}\Phi,\qquad-\nabla_{\pmb{x}}^2\Phi = \rho.
\end{equation}

In this manuscript, we focus on a single-species simulation and normalize the charge density by
\[
  \rho = \int_{\mathbb{R}^{ {d_v}}} f\, {\rm d}\pmb{v} - \rho_0,
\]
where $\rho_0$ is the constant background such that $\int_{\mathbb{R}^{ {d_x}}} \rho\, {\rm d}\pmb{x} = 0$. In addition, we assume $q_s/m_s\equiv1$.

\subsection{Finite element approximations}
We use continuous piecewise bilinear finite elements for approximating the Vlasov equation. The phase space $\Omega$ is the product of the configuration space $\Omega_{\pmb{x}}$ and velocity space $\Omega_{\pmb{v}}$. We define the function spaces of piecewise bilinear functions for $\Omega_{\pmb{x}}$ and $\Omega_{\pmb{v}}$, respectively, and then construct the space for phase space using tensor products.

Let us define the appropriate function spaces and notations for the domain $\Omega_{\pmb{z}}$, where $\pmb{z}$ can be $\pmb{x}$ or $\pmb{v}$.
Let $\calT_{\pmb{z}}$ be a grid mesh of $\Omega_{\pmb{z}}$, which consists of a finite union of cuboids
$K_{\pmb{z}} := \prod^{d_z}_{l=1}[z_{l}^-, z_{l}^+]$, where $d_z$ is the dimension of $\Omega_{\pmb{z}}$, $z_{l}^-, z_{l}^+\in \mathbb{R}$ and $z_{l}^- < z_{l}^+$, such that $\overline{\Omega}_{\pmb{z}} = \bigcup_{K_{\pmb{z}}\in \calT_{{\pmb{z}}}} \overline{K}_{\pmb{z}}$, where $\overline{\Omega}_{\pmb{z}}$ and $\overline{K}_{\pmb{z}}$ denote the closure of $\Omega_{\pmb{z}}$ and $K_{\pmb{z}}$ respectively. We use the following bilinear finite element
\begin{equation}
  \polQ_1({\pmb{z}})={\rm span}\left\{\prod^{d_z}_{l=1}z_i^{\alpha_l}:0\le\alpha_l\le 1\ {\rm for}\ l=1,\ldots,d_z\right\},\notag
\end{equation}
and define the following function space of piecewise bilinear functions
\begin{equation*}
  \calV_{\pmb{z}} := \{w(\pmb{z})\in \calC^0(\overline\Omega_{\pmb{z}}):\, w(\bT_K(\widehat{\pmb{z}}))\in \polQ_{ {1}}(\widehat{K}),\forall K\in \calT_{\pmb{z}} \},
\end{equation*}
where $\bT_K(\widehat{\pmb{z}})$ denotes an affine mapping from the reference cuboid $\widehat{K}$ to the physical cell $K\in \calT_{{\pmb{z}}}$.

Let $\{\phi_i\}_{i=1}^{N_x}$ and $\{\varphi_j\}_{j=1}^{N_v}$ be the sets of basis functions of $\calV_{\pmb{x}}$ and $\calV_{\pmb{v}}$, respectively, where $N_x={\rm dim}(\calV_{\pmb{x}})$ and $N_v={\rm dim}(\calV_{\pmb{v}})$.
We define the mesh $\calT_h:=\calT_{\pmb{x}}\times\calT_{\pmb{v}}$ on the phase space and the function space $\calV_h$ on $\calT_h$. Denote the basis set of $\calV_h$ by $\{\psi_i\}_{i=1}^N$, where $N={\rm dim}(\calV_h)$. Since \mred{$\calV:=\calV_{{\pmb{x}}}\otimes\calV_{{\pmb{v}}}$}, $N=N_xN_v$ and all the basis functions in $\calV_h$ can be decomposed to be the products of the basis functions in $\calV_{\pmb{x}}$ and $\calV_{\pmb{v}}$.
We assign the following graphic index to the basis function in $\calV_h$ such that: $\psi_{(i-1)N_v+j}=\phi_i\varphi_j$, $i=1,\ldots,N_x$ and $j=1,\ldots,N_v$. We further define $\calI(l)$ be the set of all nodes within the support of $\psi_{l}$, $l=1,\ldots,N$, and let ${\rm card}(\calI(l))$ denote the number of nodes in $\mathcal{I}(l)$. 

\subsection{Tensor-product finite element method}
We seek the finite element approximation $f_h(\pmb{x}, \pmb{v}, t) \in \calC^1([0,T]; \calV_h)$ such that
\begin{equation}
  (\partial_t f_h+\pmb{v}\cdot\nabla_{\pmb{x}}f_h+(\pmb{E}_h+\pmb{v}\times\pmb{B}_h)\cdot\nabla_{\pmb{v}}f_h,\psi_i)=0,\qquad i=1,\ldots,N,\notag
\end{equation}
where $\pmb{E}_h$ and $\pmb{B}_h$ are the approximations of $\pmb{E}$ and $\pmb{B}$, respectively, and will be defined later. The above formula is equivalent to
\begin{equation}\label{eq:GFEM:ts}
  \begin{aligned}
    \bigg(\partial_t\sum_{j=1}^N f_j\psi_j,\psi_i\bigg)+\bigg(\pmb{v}\cdot\nabla_{\pmb{x}}\sum_{j=1}^N f_j\psi_j,\psi_i\bigg)+\bigg(\pmb{E}_h\cdot\nabla_{\pmb{v}}\sum_{j=1}^N f_j\psi_j,\psi_i\bigg)\\
    +\bigg(\pmb{B}_h\cdot\Big(\nabla_{\pmb{v}}\sum_{j=1}^N f_j\psi_j\times\pmb{v}\Big),\psi_i\bigg)=0,\qquad i=1,\ldots,N,
  \end{aligned}
\end{equation}
where $f_i:=f_i(t)$ denotes the degree of freedom of $f_h$ corresponding to the $i$-th node, with $i = 1, \ldots, N$.
Upon defining $\polf=(f_1,\cdots,f_N)^{\mathsf{T}}$, we obtain the following system from \eqref{eq:GFEM:ts}
\begin{equation}\label{eq:system}
  (\polM^{{\pmb{x}}}\otimes \polM^{{\pmb{v}}})\dot{\polf} + 
  \sum_{l=1}^3 
  \Big( 
  \polA^{\pmb{x},l}\otimes\polC^{\pmb{v},l}
  + 
  \polC^{\pmb{x},l}(\pmb{E}_h)\otimes \polA^{\pmb{v},l} + 
  \polC^{\pmb{x},l}(\pmb{B}_h)\otimes \polG^{\pmb{v},l}
  \Big)  
  \polf= 0,
\end{equation}
where the operators are defined as follows
\begin{align*}
  \polA_{ij}^{{\pmb{x}},l} &= \int_{\Omega_{\pmb{x}}}(\partial_{x_l}\phi_j)\phi_i\ {\rm d}\pmb{x}, \quad
                             \polC_{ij}^{{\pmb{x}},l}(\pmb{F}_h) = \int_{\Omega_{\pmb{x}}}\pmb{F}_l\phi_j\phi_i\ {\rm d}\pmb{x}, 
                             \quad l = 1,2,3,\\
  \polM_{ij}^{{\pmb{x}}} &= \int_{\Omega_{\pmb{x}}}\phi_j\phi_i\ {\rm d}\pmb{x}, \quad
                           i,j=1,\ldots,N_x,
\end{align*}
where $\pmb{F}_h := \{\pmb{E}_h, \pmb{B}_h\}$, and 
\begin{align*}
  \polA_{ij}^{{\pmb{v}},l} &= \int_{\Omega_{\pmb{v}}}(\partial_{v_l}\varphi_j)\varphi_i\ {\rm d}\pmb{v}, \quad 
                             \polC_{ij}^{{\pmb{v}},l} = \int_{\Omega_{\pmb{v}}} v_l\varphi_j\varphi_i\ {\rm d}\pmb{v}, \\
  \polG_{ij}^{\pmb{v},l} &= \mred{\int_{\Omega_{\pmb{v}}} \big(\nabla_{\pmb{v}}\varphi_j \CROSS \pmb{v} \big)_l \varphi_i {\rm d} \pmb{v}} ,
                           \quad l = 1,2,3, \\
  \polM_{ij}^{{\pmb{v}}} &=\int_{\Omega_{\pmb{v}}}\varphi_j\varphi_i\ {\rm d}\pmb{v}, \quad i,j=1,\ldots,N_v.
\end{align*}

\section{A positivity-preserving artificial viscosity scheme}\label{section:pp}
Consider an open bounded domain $\Omega\subset\polR^d$, where $d {=d_x+d_v}>0$ is the dimension. Let us denote $\nabla:=(\nabla_{\bx},\nabla_{\bv})^\mathsf{T}$ and $\pmb{\bbetaa} := (\pmb{v},\, \pmb{E} + \pmb{v} \times \pmb{B})^{\mathsf{T}}$. Let $T>0$ be the final time, and we seek the solution $f$ of the following initial value problems with appropriate boundary conditions in $\Omega\times[0,T]$
\begin{equation}\label{eq:CL}
  \left\{
    \begin{aligned}
      &\partial_t f+\pmb{\bbetaa}\cdot\nabla f=0,\\
      &f(0)=f_0,
    \end{aligned}
  \right. 
\end{equation}
where \mred{$\DIV \pmb{\bbetaa}\equiv0$ because $\nabla_{\pmb{x}}\cdot\pmb{v}=0$ and $\nabla_{\pmb{v}}\cdot(\pmb{E}+\pmb{v}\times\pmb{B})=0$}, and $f_0$ is given initial data.
\mred{Throughout this section, periodic boundary conditions are imposed on the phase-space domain $\Omega$.}

Let $\pmb{\bbetaa}_h$ be the finite element approximation of $\pmb{\bbetaa}$. Then, the Galerkin formulation \eqref{eq:GFEM:ts} is equivalent to the following system
\begin{equation}\label{eq:semi_Vlasov}
  \sum_{j\in\calI(i)}m_{ij}\dot{f_j}
  + 
  \sum_{j\in\calI(i)} c_{ij}(\pmb{\bbetaa}_h) f_j= 0, \qquad i=1,\dots,N,
\end{equation}
where $m_{ij} := (\psi_j, \psi_i)$ is the mass matrix and $c_{ij}(\pmb{\bbetaa}_h) := (\pmb{\bbetaa}_h \cdot \nabla \psi_j, \psi_i)$ is the advection matrix. 

In this section, we introduce a positivity-preserving scheme for \eqref{eq:CL}. The method combines a low-order artificial viscosity scheme that preserves the maximum principle with a high-order viscosity, which is constructed from the residual of \eqref{eq:CL}. We apply a convex limiting method: whenever the high-order solution violates the maximum principle, the scheme reverts locally to the low-order formulation. As a result, the overall method achieves high-order accuracy while strictly maintaining the maximum principle.

\subsection{Low-order scheme}
Let us denote the current time by $t^n\ge 0$ and the current time-step by $\tau_n = t^{n+1}-t^n$. Let $f_h^n := \sum_{j=1}^N f_j^n \psi_j$ be finite element approximation of $f_h$ at $t^n$. We discretize \eqref{eq:semi_Vlasov} in time using the forward Euler scheme, and compute the low-order approximation of $f_h^{\mathsf{L},n+1} := \sum_{j=1}^N f_j^{\mathsf{L},n+1} \psi_j$ from the following scheme:
\begin{equation}\label{eq:LO}
  m_i\frac{f_i^{ {\mathsf{L}},n+1} - f_i^n}{\tau_n} 
  + 
  \sum_{j\in\calI(i)} c_{ij}^n(\pmb{\bbetaa}_h) f_j^n
  -
  \sum_{j\in\calI(i)} d_{ij}^{\mathsf{L},n} f_j^n = 0, \qquad i=1,\dots,N,
\end{equation}
where $m_i = \sum_{j \in \mathcal{I}(i)} m_{ij}$ is the lumped mass matrix, $c_{ij}^n(\pmb{\bbetaa}_h) := (\pmb{\bbetaa}^n_h \cdot \nabla \psi_j, \psi_i)$, and $d_{ij}^{\mathsf{L},n}$ is the low-order viscosity matrix defined below. It is straightforward to show that $\sum_{j \in \mathcal{I}(i)} c^n_{ij}(\pmb{\bbetaa}_h)= 0$, since $\DIV \pmb{\bbetaa}_h \equiv 0$ even in the discrete case. The coefficients $d_{ij}^{\mathsf{L},n}$ are defined as follows:
\begin{equation}\label{eq:dij_L}
  \left\{
    \begin{aligned}
      &d_{ij}^{\mathsf{L},n}= \max\big(|c_{ij}^n(\pmb{\bbetaa}_h)|, |c_{ji}^n(\pmb{\bbetaa}_h)|\big),&\qquad&j\neq i\\
      &d_{ii}^{\mathsf{L},n}= - \sum_{j \in \mathcal{I}(i),j\not=i} d_{ij}^{\mathsf{L},n},
    \end{aligned}\right.
\end{equation}
and the coefficients \(d_{ij}^{\mathsf{L},n}\) satisfy the following properties:
\begin{equation}
  d_{ij}^{\mathsf{L},n} \ge 0 \ (j \neq i), \qquad
  d_{ij}^{\mathsf{L},n} = d_{ji}^{\mathsf{L},n}, \qquad
  \sum_{j \in \mathcal{I}(i)} d_{ij}^{\mathsf{L},n} = 0.\notag
\end{equation}

In order to make the scheme stable and satisfy the discrete maximum principle, the time step $\tau_n$ is bounded by the following CFL condition:
\begin{equation}\label{eq:CFL}
  \tau_n = {\rm cfl}\min_{i=1, \ldots, N} \, \frac{m_i}{\sum_{i\not=j\in\calI(i)}d_{ij}^{\mathsf{L},n}},
\end{equation}
where ${\rm cfl}>0$ is the CFL number. 
Since $m_i\sim h^d$ and $d_{ij}^{\mathsf{L},n}\sim\Vert\pmb{\bbetaa}_h^n\Vert h^{d-1}$, the above CFL condition yields the usual time-stepping restriction used for advection problems, \ie $\tau_n \sim h / \Vert\pmb{\bbetaa}_h^n\Vert$.

\begin{proposition}[Discrete maximum principle]
  \label{prop:low_order}
  The solution obtained from the low-order scheme \eqref{eq:LO} satisfies
  \begin{equation}
    \min_{j \in \mathcal{I}(i)} f_j^n
    \;\le\;
    f_i^{\mathsf{L},n+1}
    \;\le\;
    \max_{j \in \mathcal{I}(i)} f_j^n,
    \qquad i=1,\dots,N,\notag
  \end{equation}
  provided that the time step satisfies the {\rm CFL} condition \eqref{eq:CFL} and the ${\rm CFL}$ number is within the range: $0<{\rm cfl}\leq\frac12$.
\end{proposition}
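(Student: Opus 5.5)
Our plan is to write $f_i^{\mathsf{L},n+1}$ as a convex combination of the values $f_j^n$, $j\in\calI(i)$. The first step is to use the zero row sums $\sum_{j\in\calI(i)} c_{ij}^n(\pmb{\bbetaa}_h)=0$ and $\sum_{j\in\calI(i)} d_{ij}^{\mathsf{L},n}=0$. Subtracting $f_i^n$ times each sum lets us rewrite \eqref{eq:LO} as
\begin{equation*}
  f_i^{\mathsf{L},n+1} = f_i^n - \frac{\tau_n}{m_i}\sum_{j\in\calI(i),\,j\neq i}\big(c_{ij}^n(\pmb{\bbetaa}_h) - d_{ij}^{\mathsf{L},n}\big)\big(f_j^n - f_i^n\big).
\end{equation*}
Setting $\lambda_{ij} := \frac{\tau_n}{m_i}\big(d_{ij}^{\mathsf{L},n} - c_{ij}^n(\pmb{\bbetaa}_h)\big)$ for $j\neq i$, this becomes
\begin{equation*}
  f_i^{\mathsf{L},n+1} = \Big(1-\sum_{j\neq i}\lambda_{ij}\Big) f_i^n + \sum_{j\neq i}\lambda_{ij} f_j^n .
\end{equation*}
The coefficients sum to one by construction. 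It therefore remains to show that every coefficient is nonnegative.

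For $j\neq i$, the definition \eqref{eq:dij_L} gives $d_{ij}^{\mathsf{L},n}\ge |c_{ij}^n(\pmb{\bbetaa}_h)|$. Hence $d_{ij}^{\mathsf{L},n}-c_{ij}^n(\pmb{\bbetaa}_h)\ge 0$, and so $\lambda_{ij}\ge 0$. We also obtain the upper bound $d_{ij}^{\mathsf{L},n}-c_{ij}^n(\pmb{\bbetaa}_h)\le 2d_{ij}^{\mathsf{L},n}$.

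The main step is the diagonal coefficient $1-\sum_{j\neq i}\lambda_{ij}$, and this is where the factor $\frac12$ enters. Using the upper bound above and then the CFL condition \eqref{eq:CFL},
\begin{equation*}
  \sum_{j\neq i}\lambda_{ij} \le \frac{2\tau_n}{m_i}\sum_{j\in\calI(i),\,j\neq i} d_{ij}^{\mathsf{L},n} \le 2\,{\rm cfl} \le 1 .
\end{equation*}
Here we use that the minimum over $i$ in \eqref{eq:CFL} bounds each individual ratio $m_i/\sum_{j\neq i} d_{ij}^{\mathsf{L},n}$ from below. Thus the diagonal coefficient is nonnegative whenever $0<{\rm cfl}\le\frac12$.

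The only point needing care is nodes where $\sum_{j\neq i} d_{ij}^{\mathsf{L},n}=0$. At such a node all $c_{ij}^n$ with $j\neq i$ vanish, so the update is trivial: the ratio in \eqref{eq:CFL} is interpreted as $+\infty$ and $f_i^{\mathsf{L},n+1}=f_i^n$.

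With all coefficients nonnegative and summing to one, $f_i^{\mathsf{L},n+1}$ is a convex combination of $\{f_j^n\}_{j\in\calI(i)}$. It therefore lies between $\min_{j\in\calI(i)} f_j^n$ and $\max_{j\in\calI(i)} f_j^n$, which is the claim.
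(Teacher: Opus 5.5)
Your proof is correct and follows essentially the same route as the paper. Both use the zero row sums to write $f_i^{\mathsf{L},n+1}$ as a combination of $f_i^n$ and $f_j^n$ with weights $\frac{\tau_n}{m_i}(d_{ij}^{\mathsf{L},n}-c_{ij}^n)$, get nonnegativity of the off-diagonal weights from $d_{ij}^{\mathsf{L},n}\ge|c_{ij}^n|$, and get nonnegativity of the diagonal weight from $d_{ij}^{\mathsf{L},n}-c_{ij}^n\le 2d_{ij}^{\mathsf{L},n}$ together with the CFL condition and ${\rm cfl}\le\frac12$ (your remark on nodes with $\sum_{j\neq i}d_{ij}^{\mathsf{L},n}=0$ is a harmless refinement the paper omits).
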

\begin{proof}
  Since $\sum_{j \in \mathcal{I}(i)} c^n_{ij}(\pmb{\bbetaa}_h)= 0$ and $d_{ii}^{\mathsf{L},n}=-\sum_{j\in\calI(i),j\not=i} d_{ij}^{\mathsf{L},n}$, we get that 
  \[
    \sum_{j\in\calI(i)}c^n_{ij}(\pmb{\bbetaa}_h)f_j^n
    =
    \sum_{j\in\calI(i)}c^n_{ij}(\pmb{\bbetaa}_h)f_j^n
    -
    \sum_{j\in\calI(i)}c^n_{ij}(\pmb{\bbetaa}_h)f_i^n
    =
    \sum_{j\in\calI(i),j\not=i}(f_j^n-f_i^n)c^n_{ij}\notag,
  \]
  and
  \begin{equation}
    \sum_{j\in\calI(i)}d_{ij}^{\mathsf{L},n}f_j^n=\sum_{j\in\calI(i),j\not=i}d_{ij}^{\mathsf{L},n}f_j^n+d_{ii}^{\mathsf{L},n}f_i^n=\sum_{j\in\calI(i),j\not=i}(f_j^n-f_i^n)d_{ij}^{\mathsf{L},n}.\notag
  \end{equation}
  The low-order scheme \eqref{eq:LO} hence becomes
  \begin{equation}
    m_i\frac{f^{\mathsf{L},n+1}_{i} - f^n_{i}}{\tau_n} 
    + 
    \sum_{j\in\calI(i),j\not=i}(f_j^n-f_i^n)c^n_{ij} 
    -
    \sum_{j\in\calI(i),j\not=i}(f_j^n-f_i^n)d_{ij}^{\mathsf{L},n}=0,\notag
  \end{equation} 
  which implies 
  \begin{equation}
    f_i^{\mathsf{L},n+1}=f_i^n\bigg(1-\frac{\tau_n}{m_i}\sum_{j\in\calI(i),j\not=i}(-c^n_{ij}+d_{ij}^{\mathsf{L},n})\bigg)+\frac{\tau_n}{m_i}\sum_{j\in\calI(i),j\not=i}(-c^n_{ij}+d_{ij}^{\mathsf{L},n})f_j^{n}.\notag
  \end{equation} 
  The above formulation can be rewritten as follows:
  \begin{equation}
    f_i^{\mathsf{L},n+1}=\sum_{j\in\calI(i),j\not=i}f_j^n\theta_j^n,\notag
  \end{equation} 
  where $\theta_i^n=1-\frac{\tau_n}{m_i}\sum_{j\in\calI(i),j\not=i}(-c^n_{ij}+d_{ij}^{\mathsf{L},n})$ and $\theta_j^n=\frac{\tau_n}{m_i}(-c^n_{ij}+d_{ij}^{\mathsf{L},n})$ for $j\neq i$.
  If
  $
  \sum_{j\in\mathcal{I}(i)} \theta_j^n = 1
  $
  and
  $
  \theta_j^n \ge 0 \quad \forall j\in\mathcal{I}(i),
  $
  then $f_i^{\mathsf{L},n+1}$ can be expressed as a convex combination of the values from the previous time level $f_j^{n}$. This immediately implies that the scheme satisfies a discrete maximum principle.

  In fact,
  \begin{equation}
    \sum_{j\in\calI(i)}\theta_j^n=1-\frac{\tau_n}{m_i}\sum_{j\in\calI(i),j\not= i}(-c^n_{ij}+d_{ij}^{\mathsf{L},n})+\frac{\tau_n}{m_i}\sum_{j\in\calI(i),j\not= i}(-c^n_{ij}+d_{ij}^{\mathsf{L},n})=1.\notag
  \end{equation}
  The definition of $d_{ij}^{\mathsf{L},n}$ yields $c^n_{ij}\leq d_{ij}^{\mathsf{L},n}$ and $-c^n_{ij}\leq d_{ij}^{\mathsf{L},n}$ when $j\neq i$, which implies
  \begin{equation}
    \theta_j^n=\frac{\tau_n}{m_i}(-c^n_{ij}+d_{ij}^{\mathsf{L},n})\geq0, \quad j \neq i.\notag
  \end{equation}
  Finally, for $j=i$, the following inequality holds
  \begin{equation}
    \theta_i^n=1-\frac{\tau_n}{m_i}\sum_{j\in\calI(i),j\not=i}(-c^n_{ij}+d_{ij}^{\mathsf{L},n})\geq 1-\frac{\tau_n}{m_i}\sum_{j\in\calI(i),j\not=i}(2d_{ij}^{\mathsf{L},n}),\notag
  \end{equation}
  and $\theta_i^n\geq0$ when the CFL condition \eqref{eq:CFL} is satisfied and $0<{\rm cfl}\leq\frac{1}{2}$.
\end{proof}

\subsection{High-order scheme}

We employ a high-order viscosity constructed from the residual of equation~\eqref{eq:vm}, defined as: 
\[
  (\text{residual at } t^n) \equiv D_t f_h^n + \pmb{\bbetaa}_h^n \cdot \nabla f_h^n, 
\]
where $D_t f_h^n$ is the discretisation of the time derivative. At time $t^n$, the finite element residual $R_h(f_h^n)$ is typically constructed using the standard $L^2$-projection, see for instance \cite{MR4907486, MR4456197}: find $R_h(f_h^n) \in \mathcal{V}_h$ such that  
\begin{equation}\label{eq:l2_residual}
  (R_h(f_h^n), \psi_i) = (| D_t f_h^n + \pmb{\bbetaa}_h^n \cdot \nabla f_h^n |, \psi_i),
  \qquad i = 1, \ldots, N, 
\end{equation}
where $D_t f_h^n$ is typically approximated using backward difference formulas (BDF). 

However, the above approach is not applicable when using tensor-product discretizations. One difficulty is that solving the projection problem \eqref{eq:l2_residual} becomes prohibitively expensive in high-dimensional settings. A second issue is that high-order BDF time discretizations \mred{require} storing solutions from multiple previous time levels, leading to increased memory consumption. A third challenge is that the application of nonlinear operations, such as taking the absolute value of the residual, is not straightforward within the tensor product finite element formulation.

The residual viscosity method introduced in \cite{MR3998292} uses residuals of lower dimensional equations derived from the Vlasov equation. While this approach is compatible with tensor-product discretizations and can be seen as a remedy to the projection problem \eqref{eq:l2_residual}, it may be overly diffusive, since artificial diffusion can accumulate in both the physical and velocity dimensions. Moreover, the amount of diffusion depends on the relative scaling of these dimensions, requiring careful parameter tuning to reduce excessive viscosity. In the present work, we propose a new approach for computing the residual of the full Vlasov equation within the tensor-product framework, aiming to retain robustness while significantly reducing artificial diffusion. 

\subsubsection{A mass-lumped residual approximation}
\label{sec:res_mass_lumped}

We observe that a BDF approximation of $D_t f_h^n$ introduces a truncation error associated with the time discretization. Since the governing equation implies the relation
$
\partial_t f = - \pmb{\bbetaa}\cdot\nabla f,
$
we propose to replace the discrete time derivative by a spatial approximation obtained through a \textit{mass-lumped projection}. Specifically, we set $D_t f_h^n \approx \widetilde{R}_{1,h}^n$, where we seek $\widetilde{R}_{1,h}^n \in \widetilde{\calV}_h$ such that
\begin{equation}\label{eq:dtf_projection}
  \widetilde{R}_{1,i}^n
  =
  \frac{1}{\widetilde m_i}
  \bigl( \pmb{ {\bbetaa}}_h^n \cdot \nabla f_h^n, \widetilde{\psi}_i \bigr),
  \qquad i = 1,\ldots,\widetilde N,
\end{equation}
where $\widetilde{R}_{1,i}^n$ is the nodal values of $\widetilde{R}_{1,h}^n$, $\{\widetilde{\psi}_i\}_{i=1}^{\widetilde N}$ denotes the basis of $\widetilde{\mathcal V}_h$, $\widetilde m_i$ are the corresponding lumped mass matrix. The auxiliary space $\widetilde{\mathcal V}_h$ is generally different from $\mathcal V_h$ and may be chosen so as to preserve the formal order of accuracy of the BDF truncation error.

The mass-lumped projection problem introduced above is computationally inexpensive. Using a similar technique, we define a second part of the residual by seeking $R_{2,h}^n \in \calV_h$ such that
\begin{equation}\label{eq:flux_projection}
  R_{2,i}^n
  =
  \frac{1}{m_i}
  \big( \pmb{\bbetaa}_h^n \cdot \nabla f_h^n,\, \psi_i \big),
  \qquad i = 1,\ldots,N
\end{equation}

Finally, we define the residual of the Vlasov equation at time \( t_n \) as
\begin{equation}
  \label{eq:final_residual}
  R_h^n
  :=
  I_h\big(\widetilde{R}_{1,h}^n \big)
  -
  R_{2,h}^n,
\end{equation}
where $I_h : \widetilde{\calV}_h \to \calV_h$ is a linear interpolation operator. 

This construction avoids the evaluation of discrete time derivatives and relies solely on spatial operators, making it particularly suitable for tensor-product discretizations in high-dimensional settings. In this work we construct the space $\widetilde{\mathcal V}_h$ using the same polynomial space as ${\calV}_h$ but on a coarser mesh.

\subsubsection{Residual-viscosity method}

In the previous section, we introduced an efficient approach for approximating the residual of the Vlasov equation. Using this residual approximation, we now construct a high-order residual-based viscosity. To this end, we define the viscosity matrix as follows:
\begin{equation*}
  d_{ij}^{\mathsf{R},n} := m_{ij}\frac{\max(|R_i^n|,|R_j^n|)}{n(f_h^n)}
\end{equation*}
where $n(f_h^n)=\max\big(\Vert f_h^n-{\rm mean}(f_h^n)\Vert_{L^\infty({\Omega})}, 10^{-14} \Vert f_h^n\Vert_{L^\infty({\Omega})} \big)$ is the normalization function. The high-order viscosity coefficient are defined as
\begin{equation}\label{eq:dij_H}
  d_{ij}^{\mathsf{H},n} := \min(d_{ij}^{\mathsf{L},n}, d_{ij}^{\mathsf{R},n}),\qquad d_{ii}^{\mathsf{H},n}=-\sum_{j\in\calI(i),j\not=i}d_{ij}^{\mathsf{H},n},\notag
\end{equation}
and the following properties also hold for $d_{ij}^{\mathsf{H},n}$:
\begin{equation}
  d_{ij}^{\mathsf{H},n}\geq0\ (j\neq i),\qquad d_{ij}^{\mathsf{H},n}= d_{ji}^{\mathsf{H},n},\qquad{\rm and}\ \sum_{j\in\calI(i)}d_{ij}^{\mathsf{H},n}=0.\notag
\end{equation}
The finite element scheme with high-order viscosity is given by
\begin{equation}\label{eq:HO}
  \sum _{j\in\calI(i)} m_{ij}f_j^{\mathsf{H},n+1}=\sum _{j\in\calI(i)} m_{ij}f_j^{n}-\tau_n\sum _{j\in\calI(i)}c^n_{ij}f_j^n+\tau_n\sum _{j\in\calI(i)} d_{ij}^{\mathsf{H},n}f_j^n,
\end{equation}
where the time-step $\tau_n$ is constructed via the CFL condition \eqref{eq:CFL}.

\subsection{Convex limiting}
Let the solution at time $t^n$ be given as $f^n$. Following the discussion in \cite{Guermond_etal_2018}, by subtracting the low-order scheme \eqref{eq:LO} from the high-order scheme \eqref{eq:HO}, we obtain the following relation:  
\begin{equation}
  \sum _{j\in\calI(i)}m_{ij}f_j^{\mathsf{H},n+1}=m_i f_i^{\mathsf{L},n+1}+\sum_{j\in\calI(i)}m_{ij}(f_j^n-f_i^n)+\tau_n\sum_{j\in\calI(i)}(d_{ij}^{\mathsf{H},n}-d_{ij}^{\mathsf{L},n})f_j^n.\notag
\end{equation}
We can rewrite the above equality as follows
\begin{equation}
  \begin{aligned}
    m_if_i^{\mathsf{H},n+1}
    &=m_i f_i^{\mathsf{L},n+1}+\sum_{j\in\calI(i)}m_{ij}(f_j^n-f_i^n)-\sum_{j\in\calI(i)}m_{ij}(f_j^{\mathsf{H},n+1}-f_i^{\mathsf{H},n+1})\\
    &+\tau_n\sum_{j\in\calI(i)}(d_{ij}^{\mathsf{H},n}-d_{ij}^{\mathsf{L},n})(f_j^n-f_i^n) \\
    &=m_if_i^{\mathsf{L},n+1}+\sum_{j\in\calI(i)} A_{ij}^{n+1},\notag
  \end{aligned}
\end{equation}
and the coefficient \( A_{ij}^{n+1}\) is defined as
\begin{equation}\label{eq:co_Aij}
  A_{ij}^{n+1} = 
  m_{ij} \big( (f_i^{\mathsf{H},n+1} - f_i^n) - (f_j^{\mathsf{H},n+1} - f_j^n) \big)
  + \tau_n \big( d_{ij}^{\mathsf{H},n} - d_{ij}^{\mathsf{L},n} \big) (f_j^n - f_i^n).
\end{equation}
It is trivial to show that \( A_{ii}^{n+1} = 0\) and \( A_{ij}^{n+1} = -  A_{ji}^{n+1}\).

We define the limiting matrix $\ell_{ij} = \ell_{ji}$, $\ell_{ij}\in\left[0,1\right]$, and let the solution be given by
\begin{equation}\label{eq:cl_solution}
  m_if_i^{n+1}=m_if_i^{\mathsf{L},n+1}+\sum_{j\in\calI(i)}\ell_{ij} A_{ij}^{n+1}. 
\end{equation}
Let \mred{$\lambda_j(i):=\frac{1}{{\rm card}(\calI(i))-1}$ for $i = 1, \ldots, N$, and $j\in \calI(i) \setminus \{i\}$}, where ${\rm card}(\calI(i))$ is the amount of nodes within the support of $\psi_i$. Then \eqref{eq:cl_solution} becomes 
\begin{equation}\label{eq:conv_comb}
  f_i^{n+1}=\sum_{j\in\calI(i),j\not=i}\lambda_i(f_i^{\mathsf{L},n+1}+\ell_{ij} P_{ij}\mred{^{n+1}}),  
\end{equation}
where \mred{$ P_{ij}^{n+1}:=\frac{1}{\lambda_j(i) m_i} A_{ij}^{n+1}$}. Now take the local maximum and minimal values of $f_h$:
\begin{equation}\label{eq:minmax}
  f^{n,\max}_i = \max_{j\in\calI(i)} f^n_j,\qquad f^{n,\min}_i = \min_{j\in\calI(i)} f^n_j,
\end{equation}
and compute $\ell_j^i$ as follows
\begin{equation}\label{eq:lj}
  \ell_j^i=\left\{
    \begin{aligned}
      &\min\left(\frac{\vert  f^{n,\min}_i-f_i^{\mathsf{L},n+1}\vert}{\vert P_{ij}^{\mred{n+1}}\vert+\epsilon_i},1\right),&\quad &{\rm if}\ f_i^{\mathsf{L},n+1}+ P_{ij}\mred{^{n+1}}< f^{n,\min}_i,\\
      &1,&\quad &{\rm if}\ f^{n,\min}_i\leq f_i^{\mathsf{L},n+1}+ P_{ij}\mred{^{n+1}}\leq  f^{n,\max}_i,  \\
      &\min\left(\frac{\vert  f^{n,\max}_i-f_i^{\mathsf{L},n+1}\vert}{\vert P_{ij}\mred{^{n+1}}\vert+\epsilon_i},1\right),&\quad &{\rm if}\ f^{n,\max}_i<f_i^{\mathsf{L},n+1}+ P_{ij}\mred{^{n+1}},  
    \end{aligned}
  \right.
\end{equation}
where $\epsilon_i = 10^{-14}\max_{i=1,\ldots,N}|f^{n}_i|$ is used to avoid division by zero. Finally, we define
\begin{equation}\label{eq:co_lij}
  \ell_{ij}:=\min(\ell_j^i,\ell_i^j).
\end{equation}

We have the following result for the convex limiting method:
\begin{theorem}
  The convex limiting ~\eqref{eq:cl_solution} conserves the total mass of the system, i.e., $\sum_{i=1}^N m_i f_i^{n+1}=\sum_{i=1}^N m_i f_i^n$. Moreover, the 
  method preserves the discrete maximum principle
  \begin{equation*}
    \min_{j \in \mathcal{I}(i)} f_j^n
    \;\le\;
    f_i^{n+1}
    \;\le\;
    \max_{j \in \mathcal{I}(i)} f_j^n,
    \qquad i = 1,\dots,N.
  \end{equation*}
\end{theorem}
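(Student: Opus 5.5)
We prove the two claims separately, using only the antisymmetry $A_{ij}^{n+1}=-A_{ji}^{n+1}$, the symmetry $\ell_{ij}=\ell_{ji}$, and \cref{prop:low_order}.

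\textbf{Mass conservation.} First we check that the low-order scheme \eqref{eq:LO} conserves mass. Summing \eqref{eq:LO} over $i$ gives
\[
\sum_i m_i f_i^{\mathsf{L},n+1}=\sum_i m_i f_i^n-\tau_n\sum_j f_j^n\sum_i c_{ij}^n+\tau_n\sum_j f_j^n\sum_i d_{ij}^{\mathsf{L},n}.
\]
The viscosity term vanishes because $d^{\mathsf{L},n}$ is symmetric and has zero row sums, so its column sums are zero. For the advection term, $\sum_i c_{ij}^n=(\pmb{\bbetaa}_h^n\cdot\nabla\psi_j,1)=-(\DIV\pmb{\bbetaa}_h^n\,\psi_j,1)=0$. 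This uses the partition of unity $\sum_i\psi_i=1$, integration by parts with periodic boundary conditions, and $\DIV\pmb{\bbetaa}_h\equiv0$. Next we sum \eqref{eq:cl_solution} over $i$. The correction $\sum_i\sum_j\ell_{ij}A_{ij}^{n+1}$ vanishes, since $\ell_{ij}A_{ij}^{n+1}$ is antisymmetric in $(i,j)$ and $j\in\calI(i)$ if and only if $i\in\calI(j)$. Hence $\sum_i m_if_i^{n+1}=\sum_i m_if_i^{\mathsf{L},n+1}=\sum_i m_if_i^n$.

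\textbf{Maximum principle.} We use the convex-combination form \eqref{eq:conv_comb}. The weights $\lambda_j(i)=1/({\rm card}(\calI(i))-1)$ over $j\in\calI(i)\setminus\{i\}$ are nonnegative and sum to one. Using $A_{ii}^{n+1}=0$ and $P_{ij}^{n+1}=A_{ij}^{n+1}/(\lambda_j(i)m_i)$, dividing \eqref{eq:cl_solution} by $m_i$ gives
\[
f_i^{n+1}=\sum_{j\ne i}\lambda_j(i)\bigl(f_i^{\mathsf{L},n+1}+\ell_{ij}P_{ij}^{n+1}\bigr).
\]
It is therefore enough to show that each bracket lies in $[f_i^{n,\min},f_i^{n,\max}]$. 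By \cref{prop:low_order}, $f_i^{\mathsf{L},n+1}$ lies in this interval. The map $\ell\mapsto f_i^{\mathsf{L},n+1}+\ell P_{ij}^{n+1}$ is affine in $\ell$, so the admissible set of $\ell\in[0,1]$ is an interval containing $0$. Because $\ell_{ij}=\min(\ell_j^i,\ell_i^j)\le\ell_j^i$, it suffices to check that $\ell_j^i$ itself is admissible.

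We do this case by case from \eqref{eq:lj}.
\begin{itemize}
\item In the middle case, $\ell_j^i=1$ is admissible by assumption.
\item In the lower case, $P_{ij}^{n+1}<0$, and
\[
\ell_j^i|P_{ij}^{n+1}|\le\frac{f_i^{\mathsf{L},n+1}-f_i^{n,\min}}{|P_{ij}^{n+1}|+\epsilon_i}\,|P_{ij}^{n+1}|\le f_i^{\mathsf{L},n+1}-f_i^{n,\min}.
\]
This uses $\epsilon_i\ge0$ and the absolute value equality $|f_i^{n,\min}-f_i^{\mathsf{L},n+1}|=f_i^{\mathsf{L},n+1}-f_i^{n,\min}$, which holds by \cref{prop:low_order}. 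Thus $f_i^{\mathsf{L},n+1}+\ell_j^iP_{ij}^{n+1}\ge f_i^{n,\min}$. The upper bound holds trivially because $P_{ij}^{n+1}<0$.
\item The upper case is symmetric.
\end{itemize}

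The only delicate points are these. First, the low-order bound from \cref{prop:low_order} is needed to remove the absolute values in \eqref{eq:lj}, so we assume ${\rm cfl}\le\frac12$. Second, the symmetric choice $\ell_{ij}=\ell_{ji}$ is what lets a single limiter serve both node $i$ and node $j$, keeping mass conservation while respecting both local bounds.
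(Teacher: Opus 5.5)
Your proof is correct and follows essentially the same route as the paper: low-order conservation plus antisymmetry of $\ell_{ij}A_{ij}^{n+1}$ for mass, and the convex-combination form \eqref{eq:conv_comb} with a case-by-case check of \eqref{eq:lj} for the bounds. Your version is slightly more careful than the paper's in three places: you use the column sums of $c_{ij}^n$ (which is what summing over $i$ actually requires), you keep $\epsilon_i$ as an inequality rather than an equality, and you avoid dividing by $\ell_j^i$ when it vanishes.
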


\begin{proof}
  We first prove that the convex limiting is conservative. Summing~\eqref{eq:cl_solution} for $i=1,\ldots,N$, we have:
  \begin{equation}
    \sum_{i=1}^N m_if_i^{n+1}=\sum_{i=1}^N m_if_i^{\mathsf{L},n+1}+\sum_{i=1}^N\sum_{j\in\calI(i)}\ell_{ij} A_{ij}\mred{^{n+1}},\notag
  \end{equation}
  Since $\sum_{j\in\calI(i)}c^n_{ij}=0$ and $\sum_{j\in\calI(i)}d_{ij}^{\mathsf{L},n}=0$, $\sum_{i=1}^N m_if_i^{\mathsf{L},n+1}=\sum_{i=1}^N m_if_i^n$. We also \mred{note} that $\ell_{ij}=\ell_{ji}$, $ A_{ij}\mred{^{n+1}}=- A_{ji}\mred{^{n+1}}$, for $j\neq i$, and $ A_{ii}\mred{^{n+1}}=0$, so $\sum_{i=1}^N\sum_{j\in\calI(i)}\ell_{ij} A_{ij}\mred{^{n+1}}=0$.

  We now prove that the convex limiting algorithm preserves the maximum principle by examining the contribution of each limited correction term. This theorem was introduced in \cite{Guermond_etal_2018} and we provide a proof here for completeness. We consider three cases.

  \medskip
  \noindent\textbf{Step~1.}
  Assume that
  \[
    f_i^{n,\min} \le f_i^{\mathsf{L},n+1} + P_{ij}\mred{^{n+1}} \le f_i^{n,\max}.
  \]
  Since the low-order scheme satisfies the discrete maximum principle, we have
  \[
    f_i^{n,\min} \le f_i^{\mathsf{L},n+1} \le f_i^{n,\max}.
  \]
  Then, for any \( \ell \in [0,\ell_{ij}] \), we can write
  \[
    f_i^{\mathsf{L},n+1} + \ell P_{ij}\mred{^{n+1}}
    =
    (1-\ell) f_i^{\mathsf{L},n+1}
    +
    \ell \bigl( f_i^{\mathsf{L},n+1} + P_{ij}\mred{^{n+1}} \bigr),
  \]
  which is a convex combination of admissible values. Consequently,
  \[
    f_i^{n,\min}
    \le
    f_i^{\mathsf{L},n+1} + \ell_{ij} P_{ij}\mred{^{n+1}}
    \le
    f_i^{n,\max}.
  \]

  \medskip
  \noindent\textbf{Step~2.}
  Assume now that
  \[
    f_i^{\mathsf{L},n+1} + P_{ij}\mred{^{n+1}} < f_i^{n,\min}.
  \]
  Since \( f_i^{\mathsf{L},n+1} \ge f_i^{n,\min} \), this implies \( P_{ij} < 0 \).
  By the definition of the convex limiter~\eqref{eq:lj}, we have
  \begin{equation}\label{eq:step2}
    \begin{aligned}
      f_i^{\mathsf{L},n+1} + \ell_j^i P_{ij}\mred{^{n+1}}
      &=
        f_i^{\mathsf{L},n+1}
        +
        \frac{f_i^{\mathsf{L},n+1} - f_i^{n,\min}}{-P_{ij}\mred{^{n+1}}}\, P_{ij}\mred{^{n+1}}
      \\
      &=
        f_i^{n,\min}.
    \end{aligned}
  \end{equation}

  \medskip
  \noindent\textbf{Step~3.}
  The remaining case,
  \[
    f_i^{\mathsf{L},n+1} + P_{ij}\mred{^{n+1}} > f_i^{n,\max},
  \]
  $P_{ij}>0$ since $f_i^{\mathsf{L},n+1}\le f_i^{n,\max}$. Thus,
  \begin{equation}\label{eq:step3}
    \begin{aligned}
      f_i^{\mathsf{L},n+1} + \ell_j^i P_{ij}\mred{^{n+1}}
      &=
        f_i^{\mathsf{L},n+1}
        +
        \frac{f^{n,\max}_i-f_i^{\mathsf{L},n+1}}{P_{ij}\mred{^{n+1}}}\, P_{ij}\mred{^{n+1}}
      \\
      &=
        f_i^{n,\max}.
    \end{aligned}
  \end{equation}
	
Finally, Steps 2 and 3 give that either $f_i^{\mathsf{L},n+1} + \ell_j^i P_{ij}\mred{^{n+1}}=f_i^{n, \min}$ or $f_i^{\mathsf{L},n+1} + \ell_j^i P_{ij}\mred{^{n+1}}=f_i^{n, \max}$. Defining \( \ell := \ell_{ij}/\ell_j^i \in [0,1] \), we obtain
  \[
    f_i^{\mathsf{L},n+1} + \ell_{ij} P_{ij}\mred{^{n+1}}
    =
    (1-\ell) f_i^{\mathsf{L},n+1}
    +
    \ell \bigl( f_i^{\mathsf{L},n+1} + \ell_j^i P_{ij}\mred{^{n+1}} \bigr),
  \]
  which is again a convex combination of admissible values. Hence,
  \[
    f_i^{n,\min}
    \le
    f_i^{\mathsf{L},n+1} + \ell_{ij} P_{ij}\mred{^{n+1}}
    \le
    f_i^{n,\max}.
  \]

  \medskip
  Finally, equation~\eqref{eq:conv_comb} shows that \( f_i^{n+1} \) is a convex combination of terms that individually satisfy the discrete maximum principle. Therefore,
  \[
    f_i^{n,\min} \le f_i^{n+1} \le f_i^{n,\max},
    \qquad i = 1,\ldots,N,
  \]
  which completes the proof.
\end{proof}

\begin{remark}[Relaxation of the local extrema]
  Strictly enforcing the maximum principle removes oscillations in the residual viscosity discretization but degrades the accuracy of the scheme; see the discussion in \cite[Sec. 4.7]{Guermond_etal_2018} and references therein. To retain optimal accuracy, we followed the above reference and relaxed the local maximum and minimum values in \eqref{eq:minmax} as follows:
 
   {First, we set $\Delta^2 f_i^n:=\sum_{j\in\calI(i),j\not=i}f_i^n-f_j^n$, $i=1,\ldots,N$, and then define
  \begin{equation}
  \overline{\Delta^2 f_i^n}:=\frac{1}{2{\rm card}(\calI(i))}\sum_{j\in\calI(i),j\not=i}\left(\frac{1}{2}\Delta^2f_i^n+\frac{1}{2}\Delta^2f_j^n\right).\notag
  \end{equation}
  Instead of using the local maximum $f_i^{n,\max}$ and minimal $f_i^{n,\min}$ as bounds for the limiter, we choose 
  \begin{equation}\label{eq:relaxation}
    \begin{aligned}
      \overline{f_i^{n,\max}}=\min((1+r_h)f_i^{n,\max},f_i^{n,\max}+\vert\overline{\Delta^2 f_i^n}\vert),\\
      \overline{f_i^{n,\min}}=\max((1-r_h)f_i^{n,\min},f_i^{n,\min}-\vert\overline{\Delta^2 f_i^n}\vert),
    \end{aligned}
  \end{equation}}
where $r_h = \big(\frac{m_i}{\vert\Omega\vert}\big)^{\frac{1.5}{d}}$ is \mred{a mesh-dependent relaxation parameter satisfying $r_h\sim h^{1.5}$ on quasi-uniform meshes.} We note that without this relaxation, we obtained optimal convergence rates in all norms presented in this paper, except in the \(L^\infty\)-norm. \mred{The relaxed bounds no longer imply the strict local discrete maximum principle stated above. Positivity nevertheless survives when $0\le r_h\le1$, since $\overline{f_i^{n,\min}}\ge (1-r_h)f_i^{n,\min}\ge0$. Thus, all production runs using the relaxed limiter remain positivity preserving.}

\end{remark}

\section{Finite element methods for Maxwell's equations}\label{section:FEMMaxwell}
\subsection{Semi-discretization of the Maxwell's equations}
Let $\rho_h$ and $\pmb{J}_h$ be the approximations of charge and current densities, respectively. We compute them as follows
\begin{equation}\label{eq:densities}
  \rho_h = \int_{\Omega_{\pmb{v}}}f_h\ {\rm d}\pmb{v}-\overline{\rho_h},\qquad 
  \pmb{J}_h = \int_{\Omega_{\pmb{v}}}\pmb{v}f_h\ {\rm d}\pmb{v},
\end{equation}
$\overline{\rho_h}=\int_{\Omega_{\pmb{x}}}\rho_h\ {\rm d}\pmb{x}/|\Omega_{\pmb{x}}|$. Since \mred{$f_h\in\calV_h:=\calV_{\pmb{x}}\otimes\calV_{\pmb{v}}$}, it follows that $\rho_h\in\calV_{\pmb{x}}$ and $\pmb{J}_h\in[\calV_{\pmb{x}}]^{d_{ {x}}}$. Let \(\Phi_h \in \mathcal{V}_{\pmb{x}}\) be the finite element approximation of the electric potential, obtained by solving the corresponding Poisson equation. We refer the reader to \cite{MR4907486, MR4945433} for details on the solution of the Poisson equation and the computation of \(\rho_h\) and \(\pmb{J}_h\).

The spaces of electromagnetics form a de Rham complex. To define appropriate function spaces for the discrete electromagnetic fields, we follow the framework of finite element exterior calculus (FEEC), according to the structure of the de~Rham complex:
\begin{equation}
  \begin{tikzcd}
    H^1(\Omega_{\pmb{x}}) \arrow[r, "\text{grad}"] \arrow[d, "\Pi_0"] &H(\text{curl}; \Omega_{\pmb{x}})\arrow[r, "\text{curl}"] \arrow[d, "\Pi_1"] & H(\text{div}; \Omega_{\pmb{x}}) \arrow[r, "\text{div}"] \arrow[d, "\Pi_2"] & L^2(\Omega_{\pmb{x}}) \arrow[d, "\Pi_3"] \\
    V_0 \arrow[r, "\text{grad}"] & V_1 \arrow[r, "\text{curl}"] & V_2 \arrow[r, "\text{div}"] & V_3
  \end{tikzcd}\notag
\end{equation}
and in the diagram above, \(V_0\) corresponds to the space \(\mathcal{V}_{\pmb{x}}\) we defined before. 
We further define the remaining spaces using first-order N\'ed\'elec and Raviart--Thomas elements:
\[
  \calN_{1}(\widehat{K}) 
  := 
  \big[ \polQ_{0, 1, 1}(\widehat{K}), \polQ_{1, 0, 1}(\widehat{K}), \polQ_{1, 1, 0}(\widehat{K}) \big],
\]
and 
\[
  \calR\calT_{1}(\widehat{K}) 
  := 
  \big[ 
  \polQ_{1, 0, 0}(\widehat{K}), 
  \polQ_{0, 1, 0}(\widehat{K}), 
  \polQ_{0, 0, 1}(\widehat{K}) \big],
\]
where $\polQ_{\alpha_1, \alpha_2, \alpha_3}$ composed of polynomials whose degree with respect to $x_i$ is at most $\alpha_i$. Then, we define the following function spaces:
\[
  \pmb{\polE}_h:=
  \{
  \bw\in H(\textrm{curl};\Omega_{{\pmb{x}} }): [\GRAD_{\widehat{{\pmb{x}} }}\bT_K(\widehat{{\pmb{x}} }))]^\top \bw(\bT_K({\widehat{{\pmb{x}} }})) \in \calN_1(\widehat{K}), \forall K\in \calT_{{\pmb{x}} }
  \},
\]
and
\[
  \begin{aligned}
    \pmb{\polB}_h:=
    \{
    \bw & \in H(\textrm{div};\Omega_{{\pmb{x}} }): \\
        &
          \det(\GRAD_{\widehat{{\pmb{x}} }}\bT_K(\widehat{{\pmb{x}} }))
          [\GRAD_{\widehat{{\pmb{x}} }}\bT_K(\widehat{{\pmb{x}} }))]^{-1} 
          \bw(\bT_K({\widehat{{\pmb{x}} }})) \in \calR\calT_1(\widehat{K}), 
          \forall K\in \calT_{{\pmb{x}} }
          \}.
  \end{aligned}
\]
Within the de~Rham complex, $\pmb{\polE}_h$ and $\pmb{\polB}_h$ correspond to the spaces \(V_1\) and \(V_2\), respectively.

With these spaces at hand, we obtain the semi-discretization of the Maxwell's equations as follows: for given $\pmb{J}_h\in\calC^1([0,T];[\calV_{\pmb{x}}]^{d_{ {x}}})$ find $\pmb{E}_h \in \calC^1([0,T];\ \pmb{\polE}_h)$ and $\pmb{B}_h\in \calC^1([0,T];\ \pmb{\polB}_h)$ such that 
\begin{equation}\label{eq:fem:AP11}
  \begin{aligned}
    (\partial_t \pmb{E}_h,\pmb{\eta} )
    &=(\pmb{B}_h,\nabla_{\pmb{x}}\times\pmb{\eta})-(\pmb{J}_h,\pmb{\eta}), \qquad \forall \pmb{\eta} \in \pmb{\polE}_h,
    \\
    (\partial_t\pmb{B}_h, \pmb{\xi})&=-(\nabla_{{\pmb{x}} }\CROSS\pmb{E}_h, \pmb{\xi}), \qquad \forall \pmb{\xi} \in \pmb{\polB}_h.
  \end{aligned}
\end{equation}

\begin{proposition} The divergence constraint of $\pmb{B}_h$ is satisfied, i.e.,
  \begin{equation}
    \nabla_{\pmb{x}}\cdot\pmb{B}_h=0,\qquad a.e.\qquad\forall t>0,
  \end{equation}
  if it holds when $t=0$.
\end{proposition}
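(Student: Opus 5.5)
The argument rests on the discrete de~Rham property $\nabla_{\pmb{x}}\times\pmb{\polE}_h\subset\pmb{\polB}_h$. This holds for the lowest-order N\'ed\'elec and Raviart--Thomas spaces on cuboid meshes, because the curl of an $H(\mathrm{curl})$-conforming $\calN_1$ field is $H(\mathrm{div})$-conforming and, on each cell, lies in $\calR\calT_1$ after the Piola map. With this inclusion, the second equation in \eqref{eq:fem:AP11} is an $L^2$-projection onto $\pmb{\polB}_h$ of a function that already lies in $\pmb{\polB}_h$.

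\textbf{Step 1: strong form of Faraday's law.} Since $\partial_t\pmb{B}_h+\nabla_{\pmb{x}}\times\pmb{E}_h\in\pmb{\polB}_h$, we may test the second equation with $\pmb{\xi}=\partial_t\pmb{B}_h+\nabla_{\pmb{x}}\times\pmb{E}_h$. This gives $\|\partial_t\pmb{B}_h+\nabla_{\pmb{x}}\times\pmb{E}_h\|_{L^2}^2=0$, hence
\[
\partial_t\pmb{B}_h=-\nabla_{\pmb{x}}\times\pmb{E}_h \quad \text{in } L^2(\Omega_{\pmb{x}})
\]
for all $t>0$. Equivalently, the mass matrix of $\pmb{\polB}_h$ is invertible, so the coefficient vectors of both sides coincide.

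\textbf{Step 2: taking the divergence.} Every element of $\pmb{\polB}_h$ lies in $H(\mathrm{div})$, so the divergence may be applied. Because $\nabla_{\pmb{x}}\cdot\nabla_{\pmb{x}}\times\pmb{w}=0$ in the distributional sense for any $\pmb{w}\in H(\mathrm{curl})$, we obtain
\[
\partial_t(\nabla_{\pmb{x}}\cdot\pmb{B}_h)=\nabla_{\pmb{x}}\cdot\partial_t\pmb{B}_h=-\nabla_{\pmb{x}}\cdot\nabla_{\pmb{x}}\times\pmb{E}_h=0 .
\]
Time and space derivatives commute here because $\pmb{B}_h\in\calC^1([0,T];\pmb{\polB}_h)$ lives in a finite-dimensional space, and $\nabla_{\pmb{x}}\cdot$ is a fixed linear map from $\pmb{\polB}_h$ into $V_3$.

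\textbf{Step 3: integration in time.} Integrating from $0$ to $t$ gives $\nabla_{\pmb{x}}\cdot\pmb{B}_h(t)=\nabla_{\pmb{x}}\cdot\pmb{B}_h(0)=0$ a.e. Periodic boundary conditions create no boundary issues, since no integration by parts is needed in the Faraday step.

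The main obstacle is the inclusion $\nabla_{\pmb{x}}\times\pmb{\polE}_h\subset\pmb{\polB}_h$ with the covariant and contravariant Piola maps as written. I would verify it on the reference cell by direct computation. For example, the first component of the curl of $[\polQ_{0,1,1},\polQ_{1,0,1},\polQ_{1,1,0}]$ is $\partial_2 w_3-\partial_3 w_2\in\polQ_{1,0,0}$, and the other components follow analogously. I would then transfer the result to physical cells via the identity $\nabla\times(\mathcal{F}^{\rm curl}\hat{\pmb{w}})=\mathcal{F}^{\rm div}(\hat\nabla\times\hat{\pmb{w}})$ for affine maps. Alternatively, I would simply cite the commuting de~Rham diagram stated above.
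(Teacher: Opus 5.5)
Your proposal is correct and follows the same route as the paper: the inclusion $\nabla_{\pmb{x}}\times\pmb{\polE}_h\subset\pmb{\polB}_h$ from the discrete de~Rham structure, followed by $\partial_t(\nabla_{\pmb{x}}\cdot\pmb{B}_h)=-\nabla_{\pmb{x}}\cdot\nabla_{\pmb{x}}\times\pmb{E}_h=0$. The paper's two-line proof leaves several steps implicit, and you make them explicit: recovering the strong form of Faraday's law by testing with $\partial_t\pmb{B}_h+\nabla_{\pmb{x}}\times\pmb{E}_h$, checking the inclusion on the reference cell, and integrating in time.
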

\begin{proof}
  Since $\pmb{E}_h\in\pmb{\polE}_h$, $\nabla_{\pmb{x}}\times\pmb{E}_h\in\pmb{\polB}_h$, by the construction of the discrete de Rham structure. Hence, $\partial_t(\nabla_{\pmb{x}}\cdot\pmb{B}_h)=-\nabla_{\pmb{x}}\cdot\nabla_{\pmb{x}}\times\pmb{E}_h=0$ almost everywhere. 
\end{proof}

\subsection{A divergence cleaning technique}
By integrating the Vlasov equation over the velocity space in the \mred{continuous} setting, we obtain the continuity equation for charge conservation:
\begin{equation}\label{eq:continuity_eq}
  \partial_t\rho + \nabla_\bx\cdot\pmb{J}=0.
\end{equation}
This equation is crucial for preserving Gauss's law for the electric field, i.e., $\nabla_{\pmb{x}}\cdot\pmb{E}=\rho$. In the standard finite element discretization of the Vlasov equation, the variational formulation is employed, and the  Maxwell's equation $\partial_t\pmb{E}_h=\nabla_{\pmb{x}}\times\pmb{B}_h-\pmb{J}_h$ is solved in the weak form. \mred{Under periodic boundary conditions (or boundary conditions for which the boundary term vanishes), integration by parts gives $(\nabla_{\pmb{x}}\cdot\pmb{E}_h,\phi_i)=-(\pmb{E}_h,\nabla_{\pmb{x}}\phi_i)$.} As a consequence, Gauss's law is preserved in the weak sense:
\begin{equation}\label{eq:weak_gauss}
  (\pmb{E}_h,\nabla_{\pmb{x}}\phi_i) = \mred{-(\rho_h,\phi_i)},\qquad i=1,\ldots,N_x.
\end{equation}

However, the introduction of artificial viscosity modifies the Vlasov equation, so that the continuity equation~\eqref{eq:continuity_eq} is no longer satisfied exactly and, consequently, Gauss’s law may be violated. In our previous work~\cite{MR4945433}, this issue was addressed by introducing a correction term derived from the artificial viscosity. In the present work, however, the method involves several artificial diffusion mechanisms and nonlinear limiting steps, making a direct extension of the correction strategy proposed in~\cite{MR4945433} nontrivial. 

Instead, we employ a divergence-cleaning technique introduced in~\cite{munz:2000}, in which the cleaning procedure is applied after each time step. Below, we present the technique.

At each time step, using forward Euler, we update $\pmb{E}_h$ and $\pmb{B}_h$ as follows:
\begin{equation}\label{eq:Euler_Maxwell}
  \begin{aligned}
    \Big(\frac{\widehat{\pmb{E}_h^{n+1}}-\pmb{E}_h^n}{\tau_n},\pmb{\eta}\Big)
    &=(\pmb{B}^n_h,\nabla_{\pmb{x}}\times\pmb{\eta})-(\pmb{J}^n_h,\pmb{\eta}), \qquad \forall \pmb{\eta} \in \pmb{\polE}_h,
    \\
    \Big(
    \frac{\pmb{B}_h^{n+1}-\pmb{B}_h^n}{\tau_n},
    \pmb{\xi}
    \Big)
    &=
      -(\nabla_{{\pmb{x}} }\CROSS\pmb{E}_h^n, \pmb{\xi}),
      \qquad \pmb{\xi} \in \pmb{\polB}_h.
  \end{aligned}
\end{equation}
After obtaining $\widehat{\pmb{E}_h^{n+1}}$, we correct it by
\begin{equation}\label{eq:E_correction}
  \pmb{E}_h^{n+1} = \widehat{\pmb{E}_h^{n+1}}-\delta \pmb{E}_h^{n+1},
\end{equation}
where $\delta \pmb{E}_h^{n+1}\in\pmb{\polE}_h$ and 
\begin{equation}\label{eq:dEn}
  \delta\pmb{E}_h^{n+1} = -\nabla_\bx(\delta\Phi_h^{n+1}),
\end{equation}
and the term $\delta\Phi_h^{n+1}\in\calV_{\pmb{x}}$ is obtained by solving the following variational problem for the Poisson equation: find $\delta\Phi_h^{n+1}\in\calV_{\pmb{x}}$ such that
\begin{equation}\label{eq:dPhin}
  (\nabla_\bx(\delta\Phi_h^{n+1}),\nabla_\bx\phi_i) = -(\widehat{\pmb{E}_h^{n+1}},\nabla_\bx\phi_i) \mred{- (\rho_h^{n+1},\phi_i)},\qquad i=1,\ldots,N_x.
\end{equation}

\begin{proposition}
  After the divergence cleaning, the electric field $\pmb{E}_h^n$ satisfy the Gauss's law in the weak form:
  
  \begin{equation}
    (\pmb{E}_h^n,\nabla_{\pmb{x}}\phi_i) = \mred{-(\rho_h^n,\phi_i)},\qquad i=1,\ldots,N_x.\notag
  \end{equation}
\end{proposition}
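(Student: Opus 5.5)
The plan is a direct substitution argument. Fix a time level $n+1$ and an index $i$, and test the corrected field \eqref{eq:E_correction} against $\nabla_{\pmb{x}}\phi_i$. The one structural fact needed is that $\delta\pmb{E}_h^{n+1}=-\nabla_{\pmb{x}}(\delta\Phi_h^{n+1})$ indeed lies in $\pmb{\polE}_h$. This follows from the discrete de~Rham complex displayed above: $\nabla_{\pmb{x}}V_0\subset V_1$, with $V_0=\calV_{\pmb{x}}$ and $V_1=\pmb{\polE}_h$. So the correction is admissible and the corrected field stays in $\pmb{\polE}_h$.

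Next I would compute, using linearity of the $L^2$ inner product together with \eqref{eq:E_correction} and \eqref{eq:dEn},
\begin{equation*}
  (\pmb{E}_h^{n+1},\nabla_{\pmb{x}}\phi_i)
  =(\widehat{\pmb{E}_h^{n+1}},\nabla_{\pmb{x}}\phi_i)+(\nabla_{\pmb{x}}(\delta\Phi_h^{n+1}),\nabla_{\pmb{x}}\phi_i).
\end{equation*}
Substituting the defining relation \eqref{eq:dPhin} for the second term cancels $(\widehat{\pmb{E}_h^{n+1}},\nabla_{\pmb{x}}\phi_i)$ and leaves exactly $-(\rho_h^{n+1},\phi_i)$ for every $i=1,\ldots,N_x$. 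Relabelling $n+1$ as $n$ gives the claim for every level produced by a cleaning step. For $n=0$, I would assume that the initial field is chosen to satisfy the weak Gauss law, or that it is cleaned once in the same way.

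The only genuine obstacle is well-posedness of \eqref{eq:dPhin}. Under periodic boundary conditions the stiffness operator on $\calV_{\pmb{x}}$ has the constants as its kernel. The system is therefore solvable exactly when the right-hand side annihilates $\phi\equiv1$, that is, when it vanishes after summing over $i$ (using $\sum_i\phi_i=1$).

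I would check this compatibility condition directly. Summing the right-hand side over $i$ gives $-(\widehat{\pmb{E}_h^{n+1}},\nabla_{\pmb{x}}1)-(\rho_h^{n+1},1)$. The first term is zero because $\nabla_{\pmb{x}}1=0$. The second term is zero because $\rho_h^{n+1}$ has zero mean by construction in \eqref{eq:densities}, since the average $\overline{\rho_h}$ is subtracted.

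Hence a solution $\delta\Phi_h^{n+1}$ exists and is unique up to an additive constant. That constant is irrelevant, because only its gradient enters \eqref{eq:dEn}. With this settled, the identity above holds for all test functions $\phi_i$, which completes the argument.
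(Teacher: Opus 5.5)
Your proof is correct and follows the paper's argument: expand $(\pmb{E}_h^{n+1},\nabla_{\pmb{x}}\phi_i)$ using \eqref{eq:E_correction} and \eqref{eq:dEn}, then substitute \eqref{eq:dPhin} so that the $(\widehat{\pmb{E}_h^{n+1}},\nabla_{\pmb{x}}\phi_i)$ terms cancel, leaving $-(\rho_h^{n+1},\phi_i)$. Your extra checks are sound supplements that the paper leaves implicit: that the correction lies in the N\'ed\'elec space, and that \eqref{eq:dPhin} is solvable because $\sum_i\phi_i=1$ and $\rho_h^{n+1}$ has zero mean.
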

\begin{proof}For any function $\phi_i\in\calV_{\pmb{x}}$, we can deduce:
  \begin{equation}
    \begin{aligned}
      (\pmb{E}_h^{n+1},\nabla_{\pmb{x}}\phi_i) &= (\widehat{\pmb{E}_h^{n+1}}-\delta \pmb{E}_h^{n+1},\nabla_{\pmb{x}}\phi_i)= (\widehat{\pmb{E}_h^{n+1}},\nabla_{\pmb{x}}\phi_i)-(\delta \pmb{E}_h^{n+1},\nabla_{\pmb{x}}\phi_i) \\
                                               &=(\widehat{\pmb{E}_h^{n+1}},\nabla_{\pmb{x}}\phi_i)+(\nabla_\bx(\delta\Phi_h^{n+1}),\nabla_{\pmb{x}}\phi_i)\\
                                               &=(\widehat{\pmb{E}_h^{n+1}},\nabla_{\pmb{x}}\phi_i)-(\widehat{\pmb{E}_h^{n+1}},\nabla_{\pmb{x}}\phi_i)\mred{- (\rho_h^{n+1},\phi_i)}= \mred{-(\rho_h^{n+1},\phi_i)}.
    \end{aligned}\notag
  \end{equation}
\end{proof}

\subsection{Time discretization and summary of the algorithms}
In Sec.~\ref{section:pp}, we introduced the low-order and high-order viscosity schemes together with the convex limiter. For temporal discretization, we employ high-order strong-stability-preserving Runge–Kutta (SSPRK) schemes to achieve high accuracy in time. In this work, we use the third-order SSP Runge--Kutta (SSP-RK3) scheme presented in \cite{Shu_Osher1988}.

The SSP-RK3 scheme can be written as a convex combination of Forward Euler steps and therefore inherits a positivity property the Forward Euler scheme under the same CFL condition. For the semi-discrete equation $\partial_t u = g(u)$, the update from $u^n$ to $u^{n+1}$ at each time step is given by
\begin{equation}\label{eq:rk3}
  \begin{aligned}
    u^{(0)} &= u^n, \\
    u^{(1)} &= u^{(0)} + \tau_n g(u^{(0)}), \quad &\textit{Forward-Euler},\\
    u^{(2)} &= u^{(1)} + \tau_n g(u^{(1)}), \quad &\textit{Forward-Euler},\\
    \overline{u}^{(2)} &= \tfrac34 u^{(0)} + \tfrac14 u^{(2)}, \quad &\textit{convex combination},\\
    u^{(3)} &= \overline{u}^{(2)} + \tau_n g(\overline{u}^{(2)}), \quad &\textit{Forward-Euler},\\
    u^{n+1} &= \tfrac13 u^{(0)} + \tfrac23 u^{(3)}, \quad &\textit{convex combination}.\\
  \end{aligned}
\end{equation}
\mred{This ordering is algebraically equivalent to the usual representation of SSP-RK3 \cite{Shu_Osher1988}: the first convex combination is formed before evaluating the third Forward--Euler stage.}

\begin{algorithm}[!t]
  \renewcommand{\algorithmicrequire}{\textbf{Input:}}
  \renewcommand{\algorithmicensure}{\textbf{Output:}}
  \caption{Time-stepping procedure for the Vlasov equation}
  \label{alg:Vlasov}
  \begin{algorithmic}[1]
    \Require {$f_h^n$, $\pmb{E}_h^n$ and $\pmb{B}_h^n$.} 
    \Ensure {$f_h^{n+1}$, $\pmb{E}_h^{n+1}$ and $\pmb{B}_h^{n+1}$.}
    \While{$t < T$}

    \State Compute $\pmb{\bbetaa}_h^n := (\pmb{v},\, \pmb{E}_h^n + \pmb{v} \times \pmb{B}_h^n)^{\mathsf{T}}$;
    \State Compute $R_h^n$ using \eqref{eq:final_residual};
    \State Compute the time step $\tau_n$ using the CFL condition \eqref{eq:CFL};
    \State Set $f_h^{(0)} = f_h^{n}$, 
    $\pmb{E}_h^{(0)} = \pmb{E}_h^{n}$ and $\pmb{B}_h^{(0)} = \pmb{B}_h^{n}$;
    
    \For {each Forward-Euler stages of SSP-RK3 \eqref{eq:rk3}: $k=0$ to $2$ }

    \If {$k=2$}
    \State Update
    \begin{equation*}
      f_h^{(k)} = \tfrac34 f_h^{(0)} + \tfrac14 f_h^{(k)}, \qquad
      \pmb{E}_h^{(k)} = \tfrac34 \pmb{E}_h^{(0)} + \tfrac14 \pmb{E}_h^{(k)}, \qquad
      \pmb{B}_h^{(k)} = \tfrac34 \pmb{B}_h^{(0)} + \tfrac14 \pmb{B}_h^{(k)}.
    \end{equation*}
    \EndIf
    
    \State Compute the viscosities $d_{ij}^{\mathsf{L},(k)}$ \eqref{eq:dij_L} and $d_{ij}^{\mathsf{H},(k)}$ \eqref{eq:dij_H};
    \State Compute the low-order solution $f_h^{\mathsf{L},(k+1)}$ \eqref{eq:LO}
    and high-order solution $f_h^{\mathsf{H}, (k+1)}$ \eqref{eq:HO};
    \State Compute the coefficients $ A_{ij}$ \eqref{eq:co_Aij} and $\ell_{ij}$ \eqref{eq:co_lij};
    \State Apply the limiter \eqref{eq:cl_solution}, the relaxation \eqref{eq:relaxation}, and obtain $f_h^{(k+1)}$;

    \State Compute $\pmb{E}_h^{(k+1)}$ and $\pmb{B}_h^{(k+1)}$ from \Call{Algorithm \ref{alg:Maxwell}}{$f^{(k+1)}_h$, $\pmb{E}_h^{(k)}$, $\pmb{B}_h^{(k)}$, $\tau_n$};
    
    \EndFor
    
    \State Compute the convex combination in \eqref{eq:rk3}: 
    \begin{equation*}
      f_h^{n+1} = \tfrac13 f_h^{(0)} + \tfrac23 f_h^{(3)}, \qquad
      \pmb{E}_h^{n+1} = \tfrac13 \pmb{E}_h^{(0)} + \tfrac23 \pmb{E}_h^{(3)}, \qquad
      \pmb{B}_h^{n+1} = \tfrac13 \pmb{B}_h^{(0)} + \tfrac23 \pmb{B}_h^{(3)}.
    \end{equation*}

    \State Update time: $t \leftarrow t + \tau_n$, $n \leftarrow n + 1$ ;
    \EndWhile
  \end{algorithmic}
\end{algorithm}

The Vlasov and Maxwell equations are coupled using the same time integrator for the full Vlasov–Maxwell system. At each time step, the time discretizations of the Vlasov equation is described in Algorithms~\ref{alg:Vlasov}. Note, that at the end of each time levels the Maxwell equation is solved in order to update the electric and magnetic fields. 

\begin{algorithm}[!t]
  \renewcommand{\algorithmicrequire}{\textbf{Input:}}
  \renewcommand{\algorithmicensure}{\textbf{Output:}}
  \caption{Procedure for the Maxwell update}
  \label{alg:Maxwell}
  \begin{algorithmic}[1]
    \Require {$f^{( {k+1})}_h$, $\pmb{E}_h^{( {k})}$, $\pmb{B}_h^{( {k})}$, $\tau_n$}
    \Ensure {$\pmb{E}_h^{({ {k}}+1)}$ and $\pmb{B}_h^{( {k}+1)}$.}
    \State Compute $\pmb{J}_h^{( {k})}$ using \eqref{eq:densities};
    \State Obtain $\widehat{\pmb{E}_h^{( {k}+1)}}$ and $\pmb{B}_h^{( {k}+1)}$ using \eqref{eq:Euler_Maxwell};
    \State Compute $\rho_h^{( {k}+1)}$ using \eqref{eq:densities}; 
    \State Compute $\delta\Phi_h^{( {k}+1)}$ \eqref{eq:dPhin} and $\delta E_h^{( {k}+1)}$ \eqref{eq:dEn};
    \State Perform the divergence cleaning \eqref{eq:E_correction} and obtain $\pmb{E}_h^{( {k}+1)}$;
  \end{algorithmic}
\end{algorithm}

\section{Numerical experiments}\label{section:experiment}
In this section, we present numerical results for our methods applied to several benchmark problems for both the Vlasov--Poisson and Vlasov--Maxwell equations. 

The method presented in this manuscript is implemented in DOLFINx \cite{baratta2023dolfinx}, the problem-solving environment of the FEniCS project. DOLFINx provides a high-level C++/Python interfaces for finite element computations, supporting automated code generation, parallel execution, and adaptive mesh refinement. It allows us to efficiently solve partial differential equations on complex domains while making use of modern parallel computing architectures.

In the residual computation presented in Section~\ref{sec:res_mass_lumped}, the space 
$\tilde{\calV}_h$ requires interpolation between different meshes, which can be performed using the built-in functions in DOLFINx. In the results presented below, the space $\tilde{\calV}_h$ is constructed using the same polynomial space as $\calV_h$ but on coarser meshes. For example, $\tilde{\calV}_h = \calV_{ {qh}}$ denotes that the space $\tilde{\calV}_h$ is constructed on a mesh that is $q$ times coarser then the mesh used for $\calV_h$. Therefore, $d_{ij}^{\mathsf{H},  {qh}}$ corresponds to the residual \eqref{eq:final_residual} computed using $R^n_{1, {qh}} \in \calV_{ {qh}}$. For instance, if $\calV_h$ corresponds to a $100 \times 100$ mesh of $\mathbb{Q}_1$ elements, then $d_{ij}^{\mathsf{H},  {4h}}$ corresponds to the case where the residual is evaluated on the coarser space $\tilde{\calV}_h = \calV_{ {4h}}$ with a $25 \times 25$ mesh of $\mathbb{Q}_1$ elements.

We first present numerical validations for the 1D1V Vlasov--Poisson equation. Then, we show benchmark results for the Vlasov--Maxwell system in 1D2V and 2D2V settings. Unless otherwise specified, the CFL number is set to $0.3$. Additional numerical experiments are reported in the supplementary material.

\subsection{1D1V Vlasov--Poisson equations}
\subsubsection{Landau damping}
First, we test the accuracy of the method through a convergence study. For the Vlasov equations, as introduced in \cite{MR4907486,MR4945433,MR3267101}, let $f(\pmb{x}, \pmb{v}, 0)$ be the initial condition. We then solve the Vlasov--Poisson equation to obtain the solution $f(\pmb{x}, \pmb{v}, T)$ at $t = T$. Now, if we use $f(\pmb{x}, -\pmb{v}, T)$ as the initial data and solve the Vlasov--Poisson equation again, the resulting solution at $t = T$ should be $f(\pmb{x}, -\pmb{v}, 0)$.

Let the phase space be $\Omega:=[0,4\pi ]\times [-6,6]$, and use the following initial data
\begin{equation}
  f(x,v,0)=\frac{1}{\sqrt{2\pi}}{\rm exp}\left(-\frac{v^2}{2}\right)(1+\alpha{\rm cos}(k x)), \notag 
\end{equation}
where $k=0.5$ and $\alpha=0.01$. The aim of this benchmark is to assess the accuracy of the residual viscosity method constructed using several spaces $\tilde{\calV}_h$. The simulation results are reported in Table~\ref{tab:convergence1}. The error norms are slightly larger for coarser spaces $\tilde{\calV}_h$, as expected. However, the convergence rates are optimal in all cases.
 
We then apply the convex limiter together with the relaxation technique and assess the accuracy of the resulting scheme; the results are shown in Table~\ref{tab:convergence2}. We observe that, in terms of the $L^1$- and $L^2$-norms, the errors remain comparable to those obtained without limiting and continue to converge at second order when the limiter is applied. We also observe that, for the mesh resolutions tested here, when the relaxation is not applied, the $L^\infty$-norms of the errors still exhibit second-order convergence, rather than the reduced rate reported in \cite{Guermond_etal_2018}; however, the errors are significantly larger than those obtained without the convex limiter. When the relaxation technique is applied, the errors are reduced to the same level as those obtained without the limiter, particularly when the mesh is sufficiently refined.

\begin{table}[htbp]
  \caption{Linear Landau damping: convergence rates obtained using Galerkin finite elements and the residual viscosity method with different residuals. The reported errors are for $f$ at \mred{$t=10$} in the forward--backward test: the simulation is run to $t=5$, the velocity is reversed, and it is continued to $t=10$. The parameters are \mred{$k=0.5$}, $\alpha = 0.01$, and \mred{${\rm cfl}=0.3$}.
} 
  \label{tab:convergence1}
  \small
  \begin{center}
    \begin{tabular}{c|c|c|c|c|c|c|c} \hline
      &$\#{\rm elements}$&$L^1$-error&Rate&$L^2$-error&Rate&$L^\infty$-error&Rate\\ \hline
      \multirow{5}{*}{Galerkin} &$32\times32$ & 1.12E-02&-&1.11E-02 &- &1.49E-02&- \\
      &$64\times64$&2.83E-03&1.99&2.78E-03&2.00&3.87E-03&1.95 \\
      &$128\times128$&7.08E-04&2.00&6.95E-04&2.00&9.76E-04&1.99 \\
      &$256\times256$&1.77E-04&2.00&1.74E-04&2.00&2.45E-04&2.00  \\
      &$512\times512$&4.43E-05&2.00&4.34E-05&2.00&6.12E-05&2.00  \\ \hline
      \multirow{5}{*}{RV with $d_{ij}^{\mathsf{H}, 2h}$} &$32\times32$ & 1.12E-02&-&1.11E-02 &- &1.52E-02&- \\
      &$64\times64$&2.87E-03&2.03&2.81E-03&2.03&3.90E-03&1.96 \\
      &$128\times128$&7.12E-04&2.10&6.97E-04&2.01&9.79E-04&1.99 \\
      &$256\times256$&1.77E-04&2.00&1.74E-04&2.00&2.45E-04&2.00  \\
      &$512\times512$&4.43E-05&2.00&4.34E-05&2.00&6.12E-05&2.00  \\ \hline
      \multirow{5}{*}{RV with $d_{ij}^{\mathsf{H}, 4h}$} &$32\times32$ & 1.22E-02&-&1.19E-02 &- &1.55E-02&- \\
      &$64\times64$&2.96E-03&2.04&2.88E-03&2.04&3.94E-03&1.97 \\
      &$128\times128$&7.20E-04&2.04&7.04E-04&2.03&9.84E-04&2.00 \\
      &$256\times256$&1.78E-04&2.02&1.74E-04&2.01&2.45E-04&2.00\\
      &$512\times512$&4.44E-05&2.00&4.35E-05&2.00&6.12E-05&2.00  \\ \hline
      \multirow{5}{*}{RV with $d_{ij}^{\mathsf{H}, 8h}$} &$32\times32$ & 1.28E-02&-&1.22E-02 &- &1.58E-02&- \\
      &$64\times64$&3.09E-03&2.04&2.98E-03&2.03&3.99E-03&1.98 \\
      &$128\times128$&7.44E-04&2.05&7.19E-04&2.05&9.91E-04&2.01 \\
      &$256\times256$&1.80E-04&2.05&1.76E-04&2.03&2.47E-04&2.01 \\
      &$512\times512$&4.45E-05&2.02&4.36E-05&2.01&6.13E-05&2.01  \\ \hline
    \end{tabular}
  \end{center}
\end{table}

\begin{table}[htbp]
  \caption{Linear Landau damping: errors of finite element solutions with convex limiter, without and with relaxation technique applied. The reported errors are for $f$ at \mred{$t=10$} in the forward--backward test: the simulation is run to $t=5$, the velocity is reversed, and it is continued to $t=10$. The parameters are \mred{$k=0.5$}, $\alpha = 0.01$, and \mred{${\rm cfl}=0.3$}.}
  \label{tab:convergence2}
  \begin{center}
  \small
    \begin{tabular}{c|c|c|c|c|c|c|c} \hline
      relaxation&$\#{\rm elements}$&$L^1$-error&Rate&$L^2$-error&Rate&$L^\infty$-error&Rate\\ \hline
      \multirow{5}{*}{without}&$32\times32$ & 1.28E-02&-&1.54E-02 &- &3.39E-02&- \\
                &$64\times64$ & 2.92E-03&2.14&3.03E-03&2.34&8.49E-03&2.00 \\
                &$128\times128$&7.15E-04&2.03&7.11E-04&2.09&2.12E-03&2.00 \\
                &$256\times256$&1.78E-04&2.01&1.75E-04&2.02&5.31E-04&2.00  \\
                &$512\times512$&4.43E-05&2.00&4.35E-05&2.01&1.33E-04&2.00  \\  \hline
      \multirow{5}{*}{with}&$32\times32$ & 1.17E-02&&1.14E-03&&1.52E-03& \\
                &$64\times64$ &2.87E-03&2.03&2.81E-03&2.03&3.90E-03&1.96 \\ 
                &$128\times128$&7.12E-04&2.01&6.97E-04&2.01&9.79E-04&1.99\\
                &$256\times256$&1.77E-04&2.00&1.74E-04&2.00&2.45E-04&2.00  \\
                &$512\times512$&4.43E-05&2.00&4.34E-05&2.00&6.12E-05&2.00  \\\hline
    \end{tabular}
  \end{center}
\end{table}
For all remaining tests, unless otherwise specified, we use  $R^n_{1, {2h}} \in \calV_{ {2h}}$ and $d_{ij}^{\mathsf{H},  {2h}}$, and we apply the convex limiting together with the relaxation.

Now, we set $\alpha=0.5$, and look at the strong Landau damping. The solution $f_h$ at $t = 30$ is plotted in Figure~\ref{fig:LD_t30}, where similar structures to those reported in \cite{MR2806222, MR4329985, MR2576241} can be observed. We observe that, when using the Galerkin finite element method, the extrema significantly violate the physical bounds, with the minimum reaching approximately  {$-0.14$}. This behavior may be caused by numerical errors and spurious oscillations. 
When using the residual viscosity schemes, the extrema are improved, as the artificial viscosity helps suppress these oscillations; however, it does not guarantee the positivity of the distribution function. By applying the convex limiter, positivity preserving is achieved. The resulting distribution is similar to that obtained without the convex limiter, but confined to a smaller range. We depict \mred{$\log_{10}$ of the electric energy $\mathcal{E}_1:=\tfrac12\Vert E_1\Vert_{L^2(\Omega_x)}^2$} in Figure~\ref{fig:LD}. We obtain fitted damping and growth rates of $-0.2854$ and $0.0867$, respectively, which \mred{compare well with the standard reference values $-0.2920$ and $0.0815$} reported in \cite[Figure 5]{Kraus_Kormann_Morrison_Sonnendruecker_2017}, regardless of whether the limiter is applied.
\begin{figure}[htbp]
  \centering
  \begin{subfigure}[b]{0.32\textwidth}
    \includegraphics[width=\linewidth]{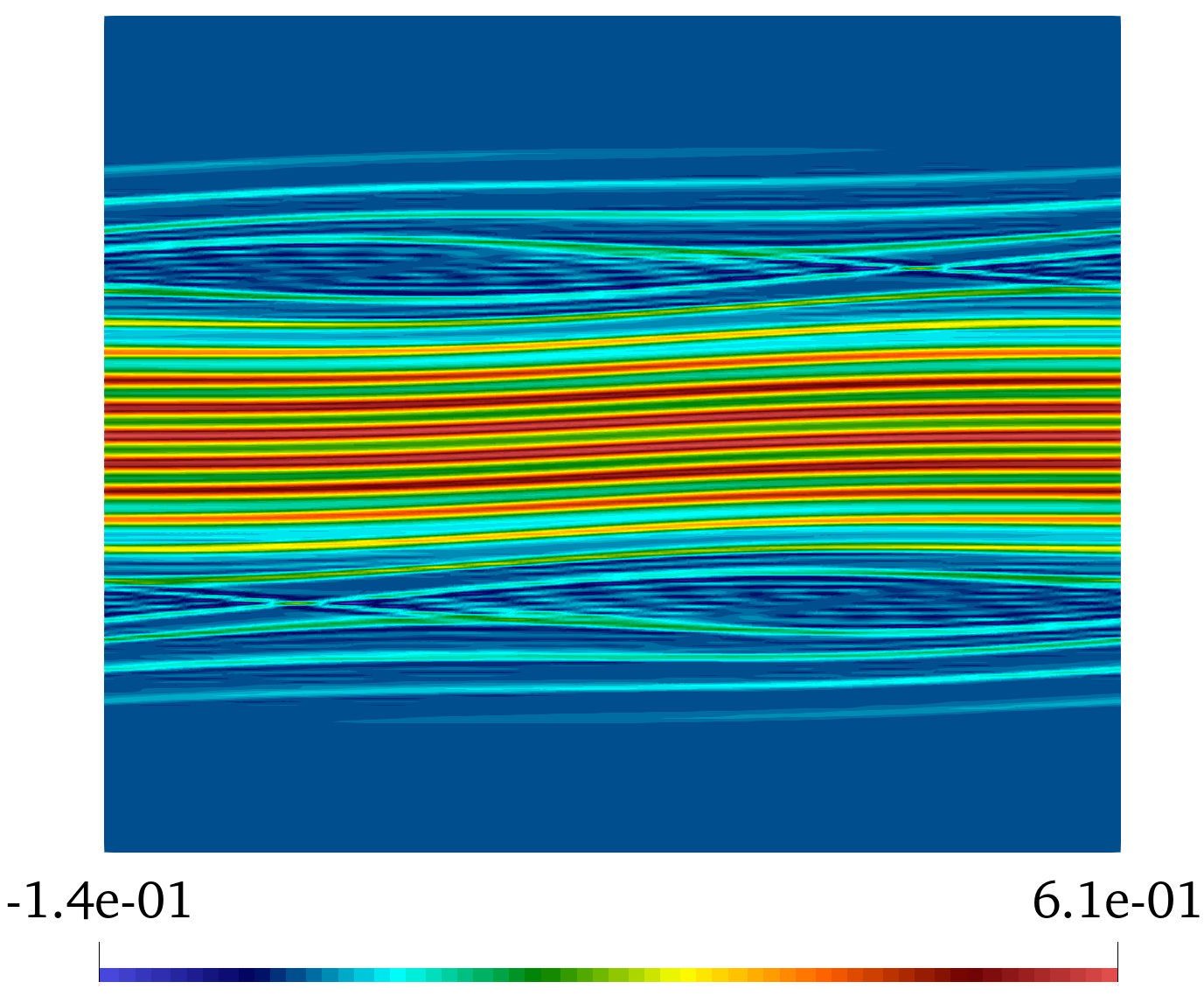}
    \caption{Galerkin}
  \end{subfigure}
  \begin{subfigure}[b]{0.32\textwidth}
    \includegraphics[width=\linewidth]{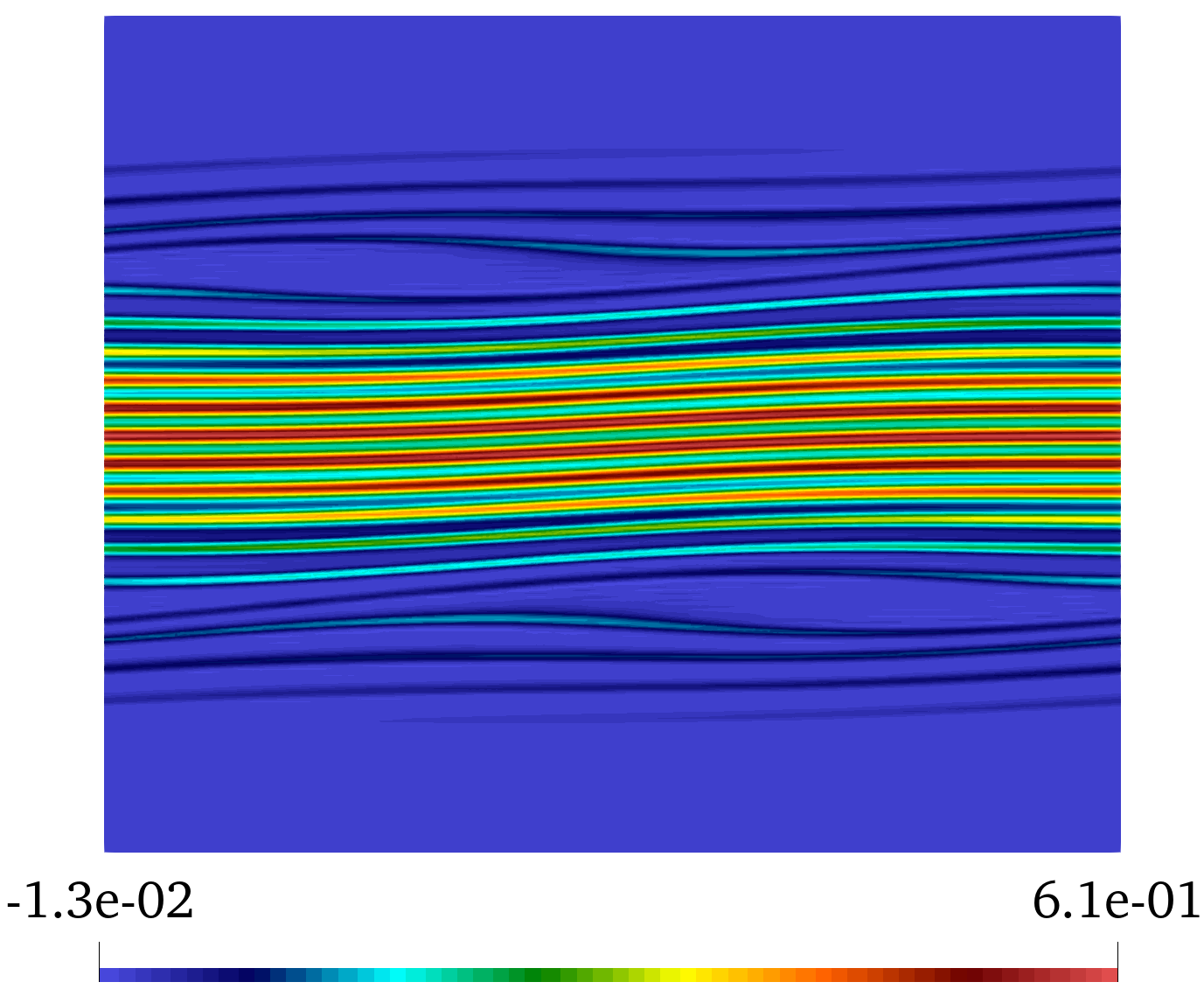}
    \caption{RV with $d_{ij}^{\mathsf{H},  {2h}}$}
  \end{subfigure}
  \begin{subfigure}[b]{0.32\textwidth}
    \includegraphics[width=\linewidth]{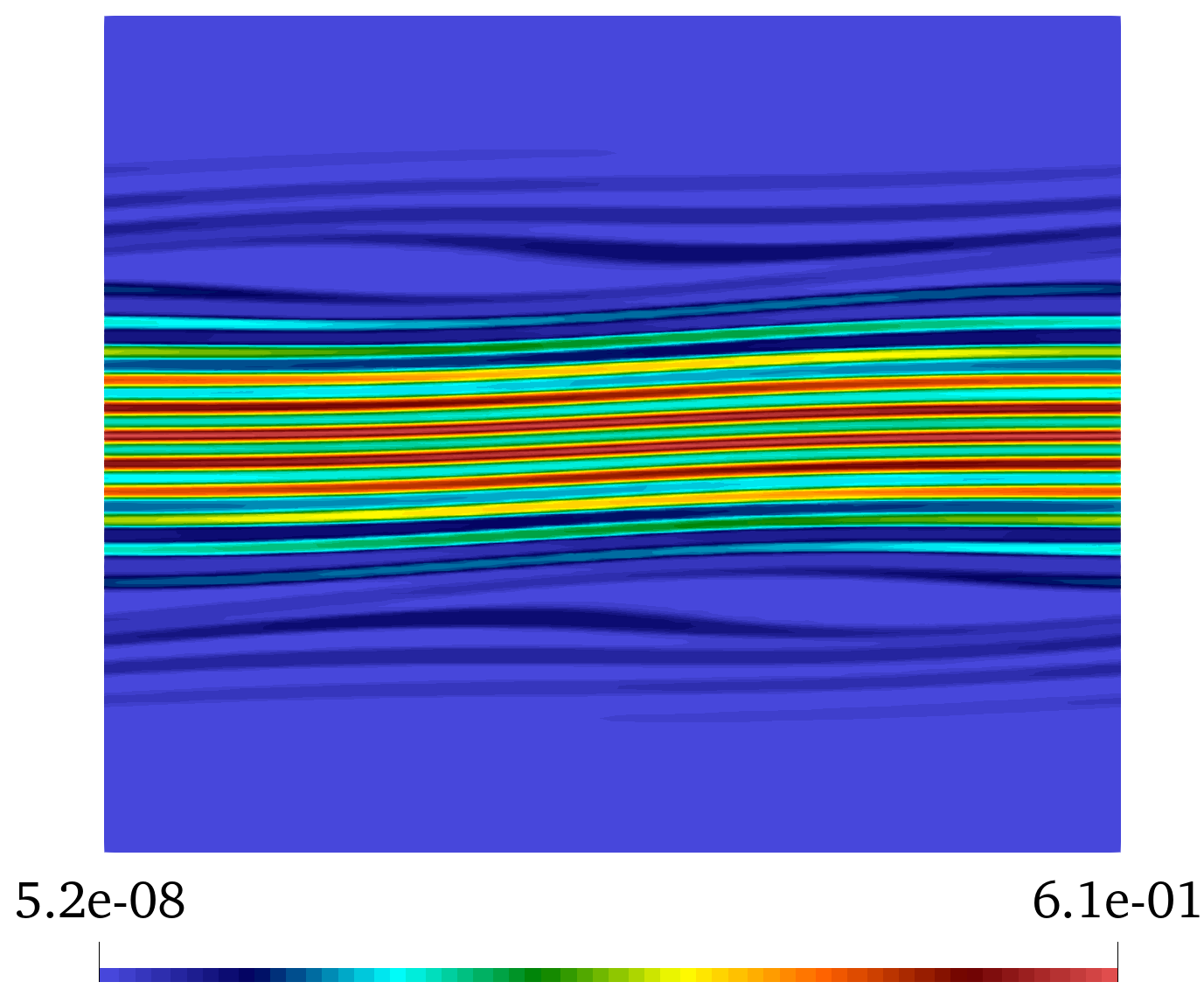}
    \caption{RV with $d_{ij}^{\mathsf{H},  {2h}}$ + limiter}
  \end{subfigure}
  \caption{Strong Landau damping: $f_h$ at $t=30$. The functions are plotted at the nodal points, and the mesh consists of \(128 \times 256\) elements. The figure compares the solutions obtained using Galerkin and residual viscosity methods without and with the limiter.}
  \label{fig:LD_t30}
\end{figure}

\begin{figure}[htbp]
  \centering
  \begin{subfigure}[b]{0.5\textwidth}
    \includegraphics[width=\linewidth]{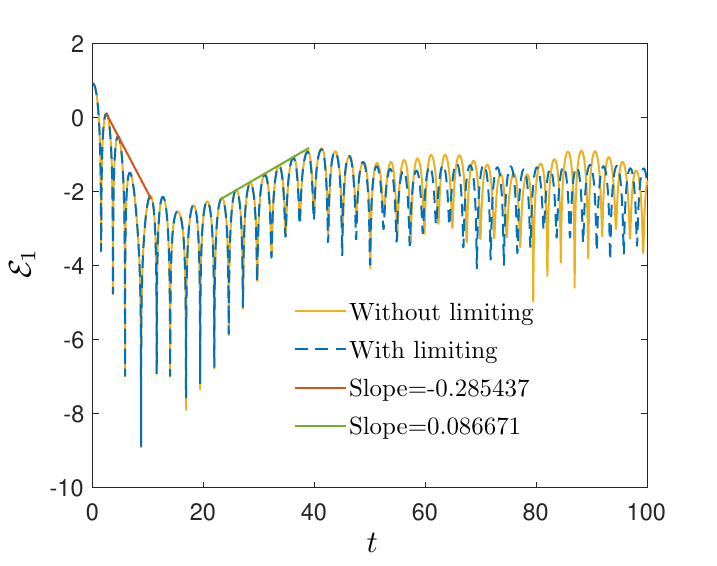}
  \end{subfigure}
  \caption{Strong Landau damping: time evolution of \mred{$\log_{10}(\mathcal{E}_1)$}. The mesh consists of \(128 \times 256\) elements. The figure compares the solutions obtained using residual viscosity without and with the limiter.}
  \label{fig:LD}
\end{figure}

\subsubsection{Two-stream instability}
Now, we consider the two-stream instability for the Vlasov--Poisson system. Let the phase space be $\Omega := [0,4\pi]\times[-5,5]$ and the initial data be
\begin{equation}
  f(x,v,0)=\frac{1}{\sqrt{2\pi}}v^2{\rm exp}\left(-\frac{v^2}{2}\right)(1+\alpha{\rm cos}(k x)), \notag
\end{equation}
where $k = 0.5$, $\alpha = 0.01$. The solutions at $t = 10$, $t = 15$, and $t = 20$ are plotted in Figure~\ref{fig:TS1} (a), and we also show the values of $f_h$ along the diagonal line of the phase space in Figure~\ref{fig:TS1} (b). The solution strictly preserves the positivity of the distribution function, and there is a lower bound slightly above zero, as shown in Figure~\ref{fig:TS1} (b).

In Figure~\ref{fig:TS_residual}, we present the residuals in which the first term in~\eqref{eq:final_residual} is computed using three coarser finite element spaces, namely $\calV_{ {2h}}$, $\calV_{ {4h}}$, and $\calV_{ {8h}}$. We observe that, for the residual $R_h^n$, the overall distribution remains qualitatively similar when different mesh resolutions are used for $R_{1, {qh}}^n \in \calV_{ {qh}}$. Moreover, we find that using a coarser space $\calV_{ {qh}}$ results in residuals of larger magnitude, which corresponds to increased numerical diffusion.
\begin{figure}[htbp]
  \centering
  \begin{subfigure}[b]{0.33\textwidth}
    \includegraphics[width=\linewidth]{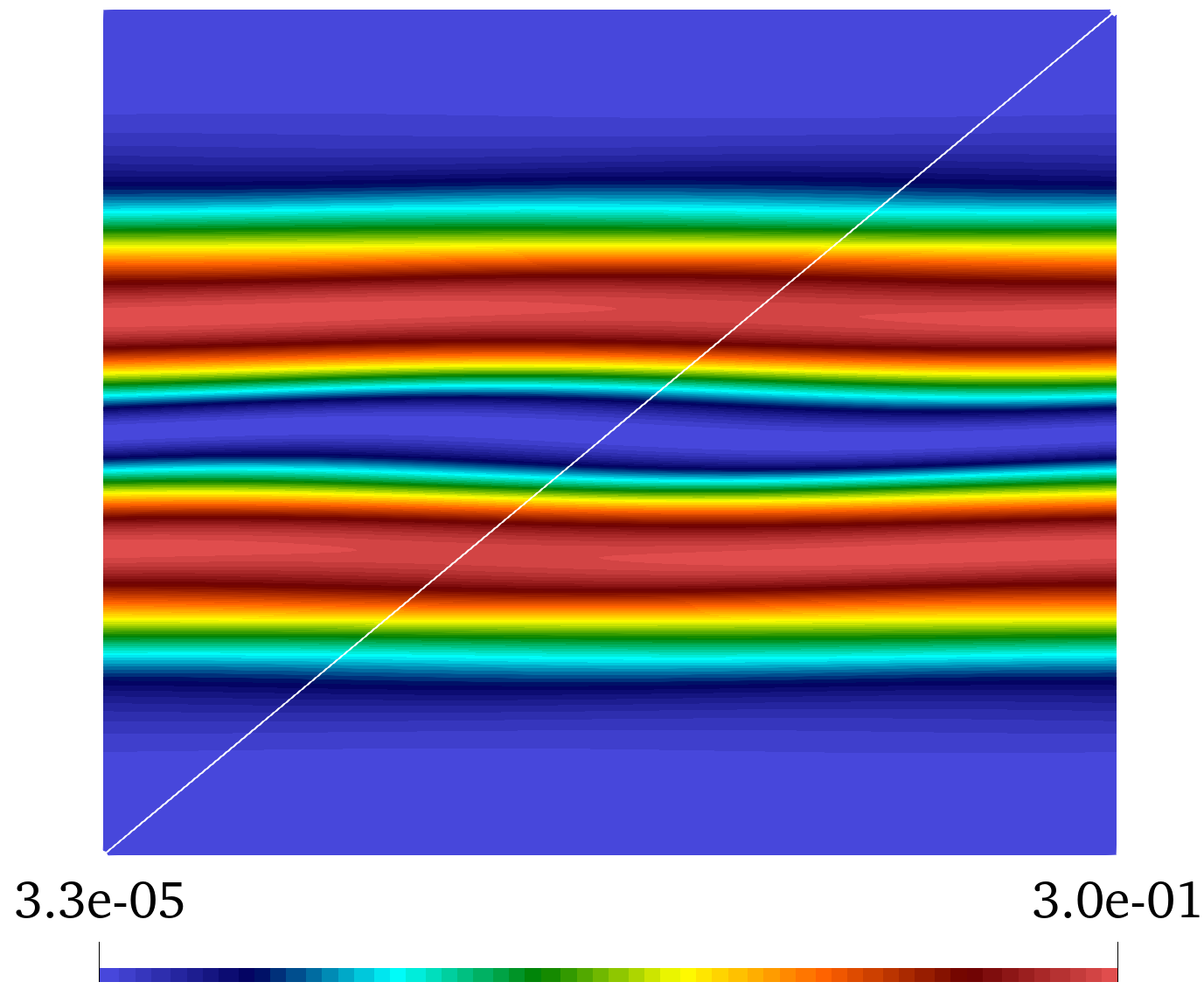}
    \caption{$f_h$ at $t=10$}
  \end{subfigure}\hfill
  \begin{subfigure}[b]{0.33\textwidth}
    \includegraphics[width=\linewidth]{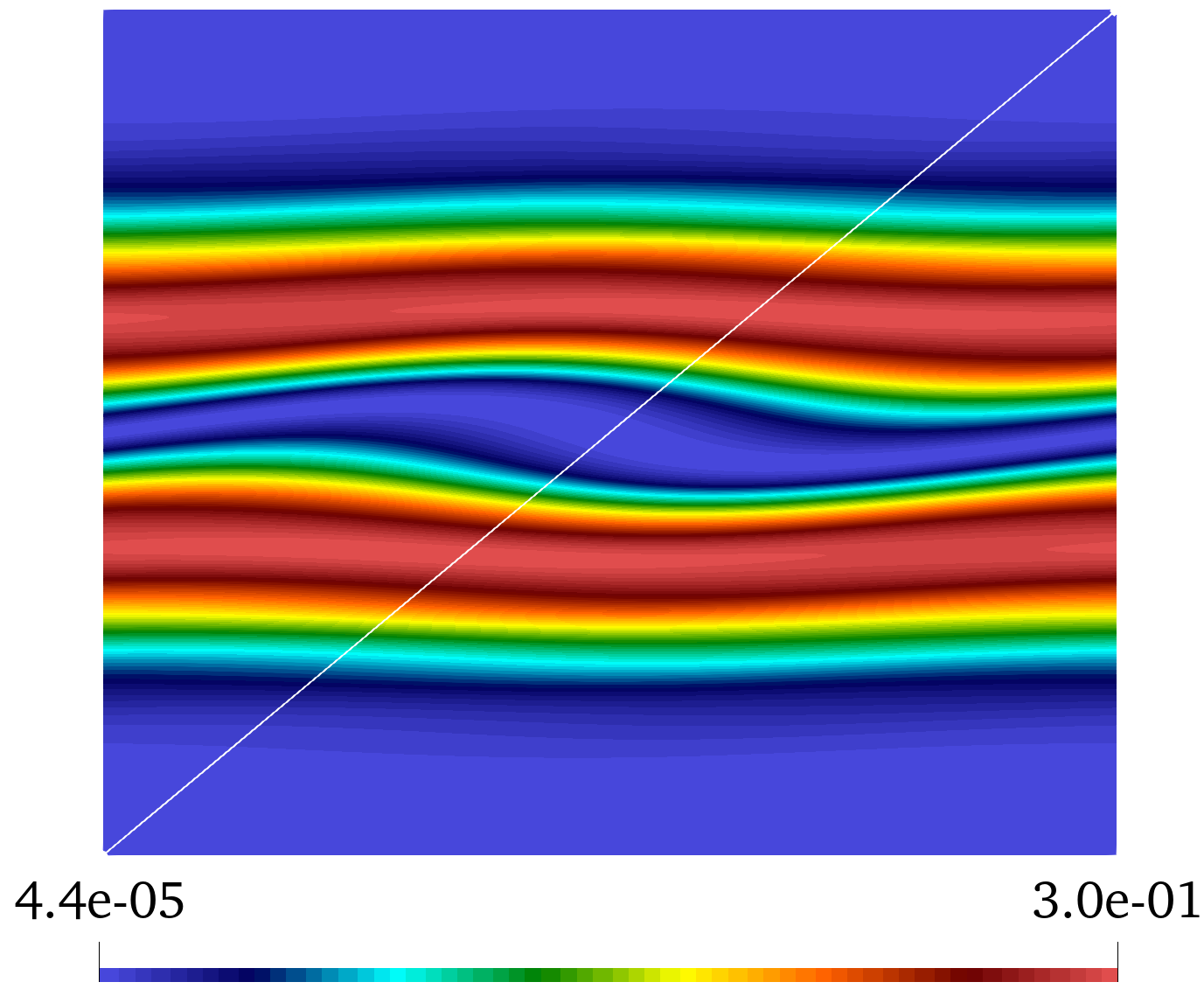}
    \caption{$f_h$ at $t=15$}
  \end{subfigure}\hfill
  \begin{subfigure}[b]{0.33\textwidth}
    \includegraphics[width=\linewidth]{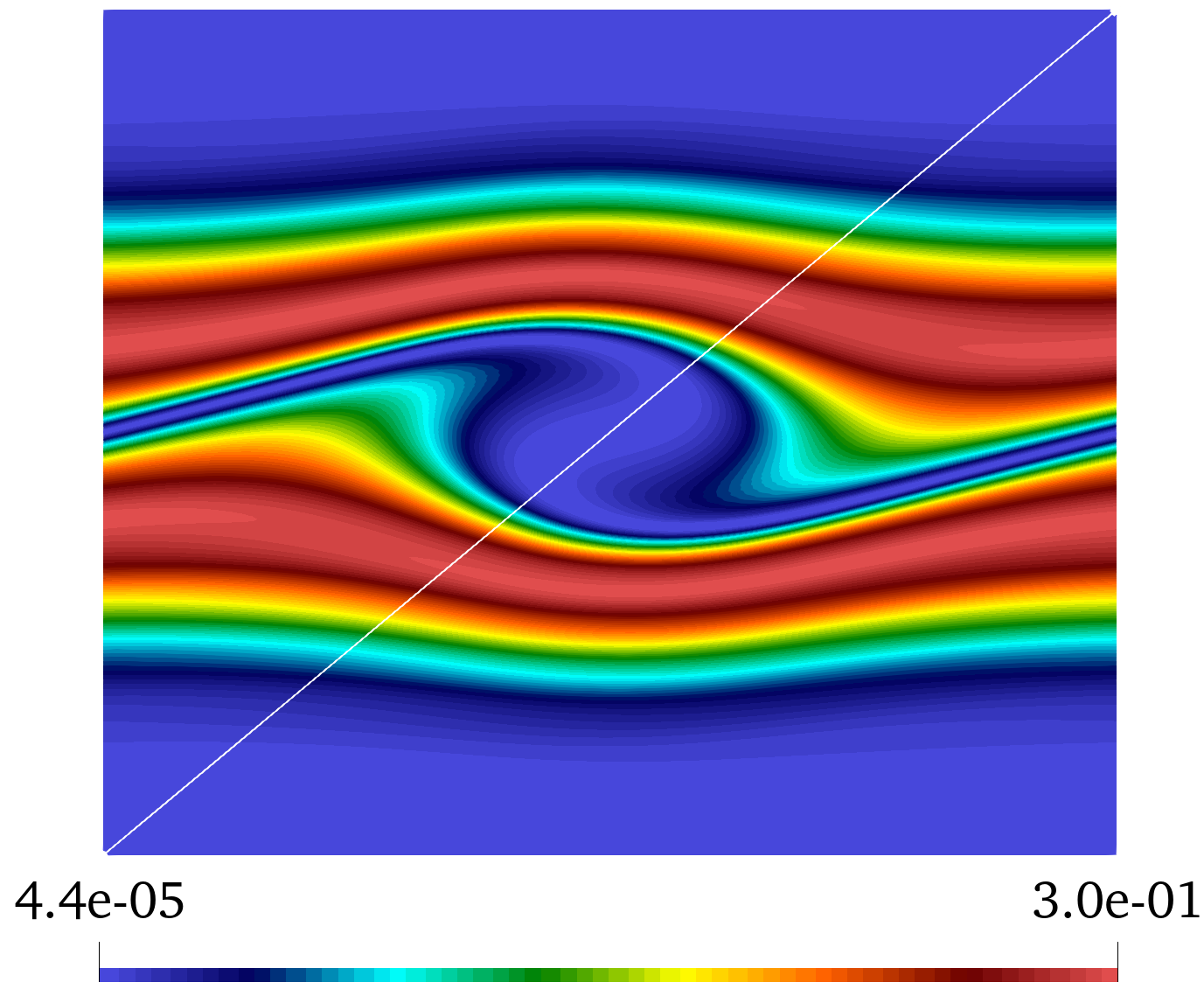}
    \caption{$f_h$ at $t=20$}
  \end{subfigure}\hfill
  \begin{subfigure}[b]{0.33\textwidth}
    \includegraphics[width=\linewidth]{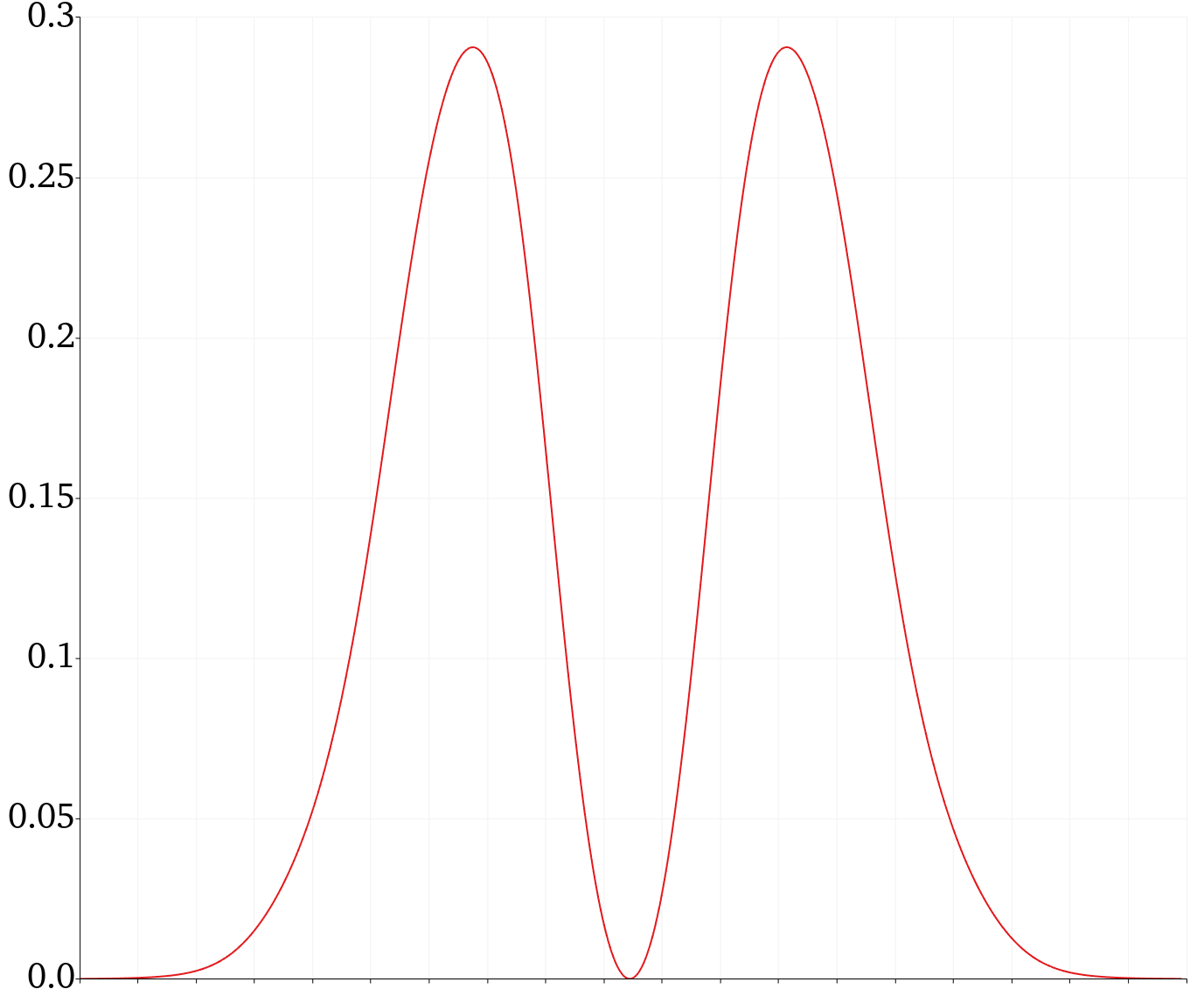}
    \caption{\mred{$t=10$}}
  \end{subfigure}\hfill
  \begin{subfigure}[b]{0.33\textwidth}
    \includegraphics[width=\linewidth]{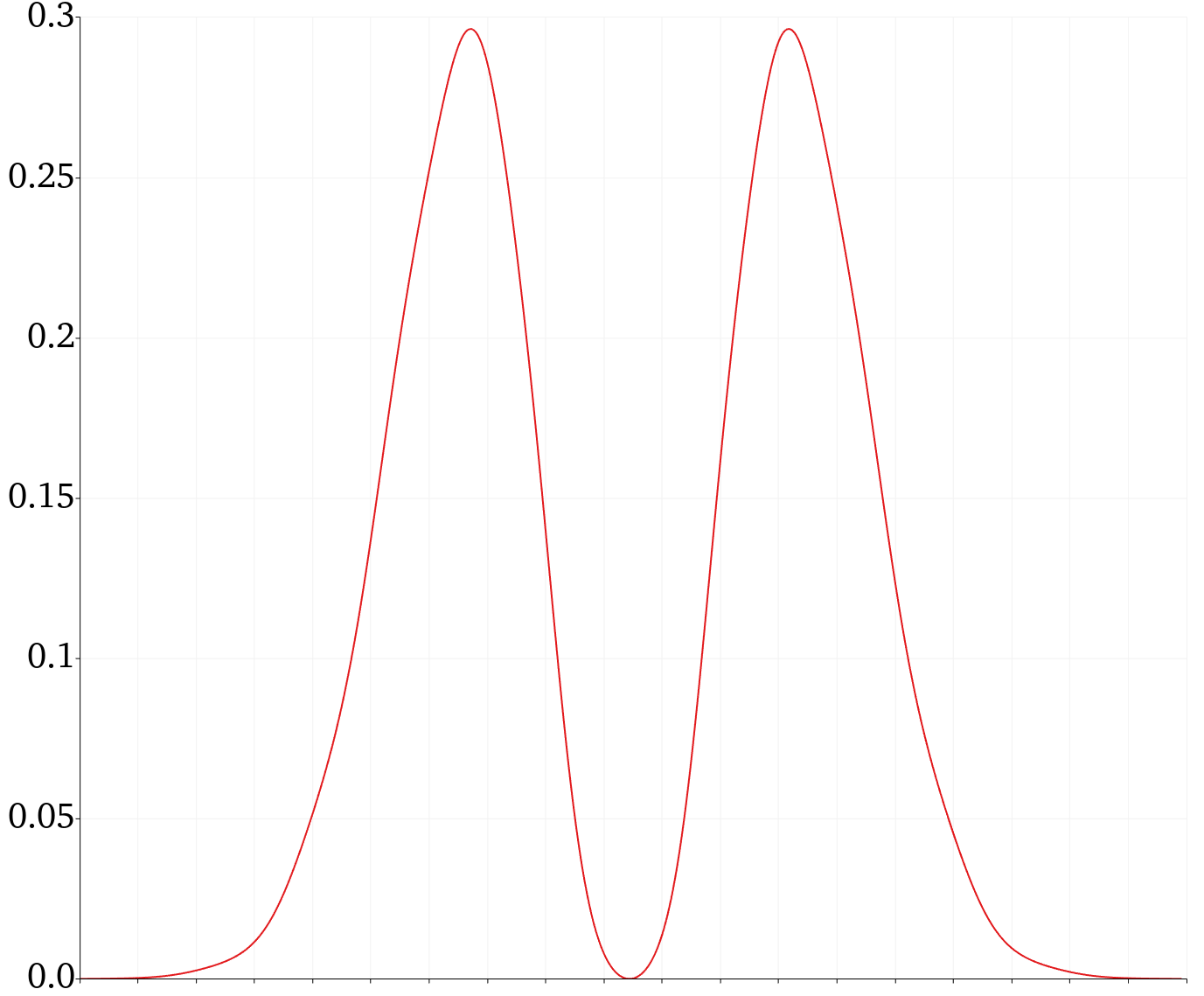}
    \caption{\mred{$t=15$}}
  \end{subfigure}\hfill
  \begin{subfigure}[b]{0.33\textwidth}
    \includegraphics[width=\linewidth]{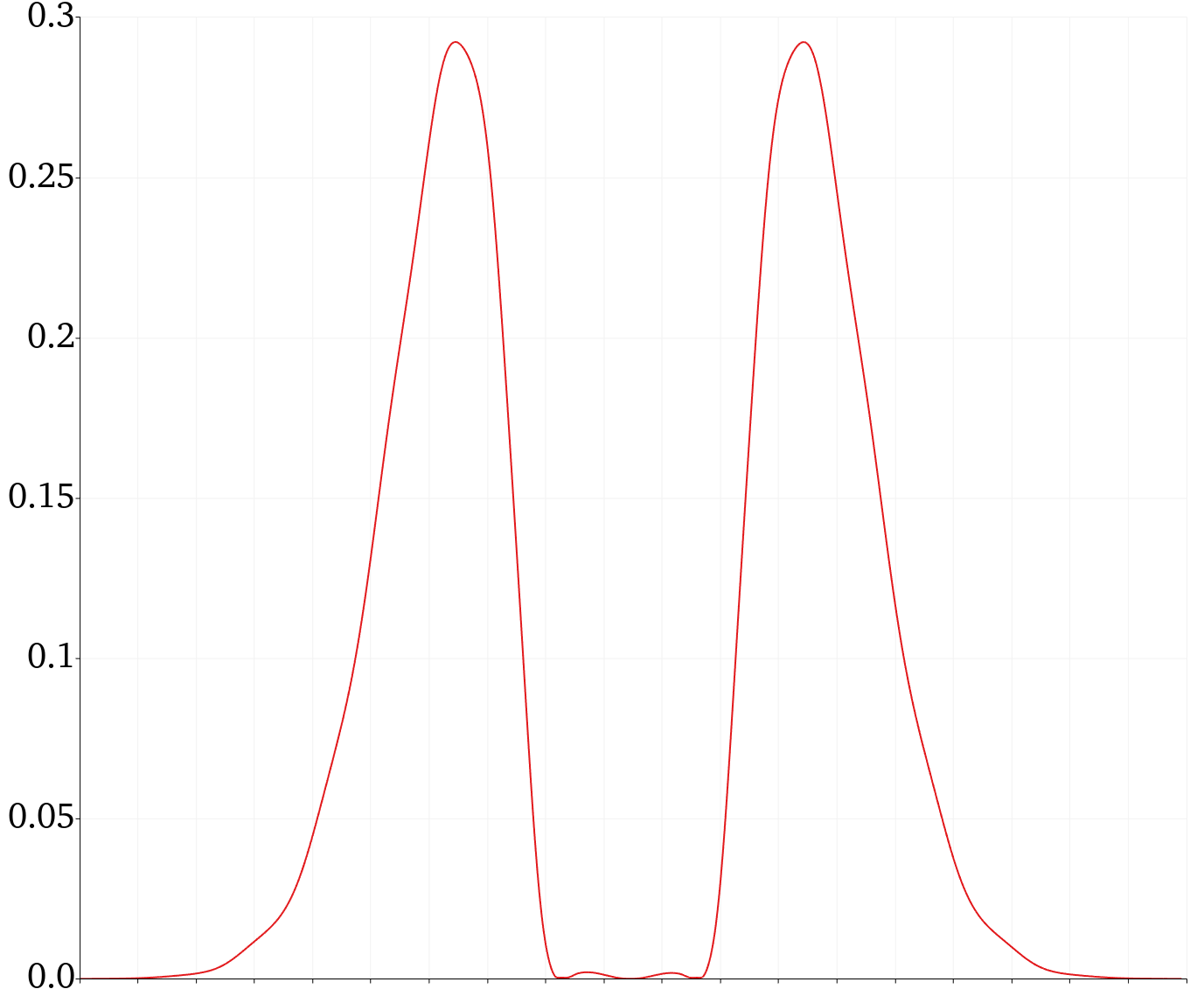}
    \caption{\mred{$t=20$}}
  \end{subfigure}\hfill
  \caption{Two-stream instability. \mred{The three columns correspond to $t=10$, $15$, and $20$, respectively. The upper row shows $f_h$ in phase space, and the lower row shows the corresponding diagonal cuts.} The functions are plotted at the nodal points, and the mesh consists of $256\times512$ elements.}
  \label{fig:TS1}
\end{figure}

\begin{figure}[htbp]
  \centering
  \begin{subfigure}[b]{0.32\textwidth}
    \includegraphics[width=\linewidth]{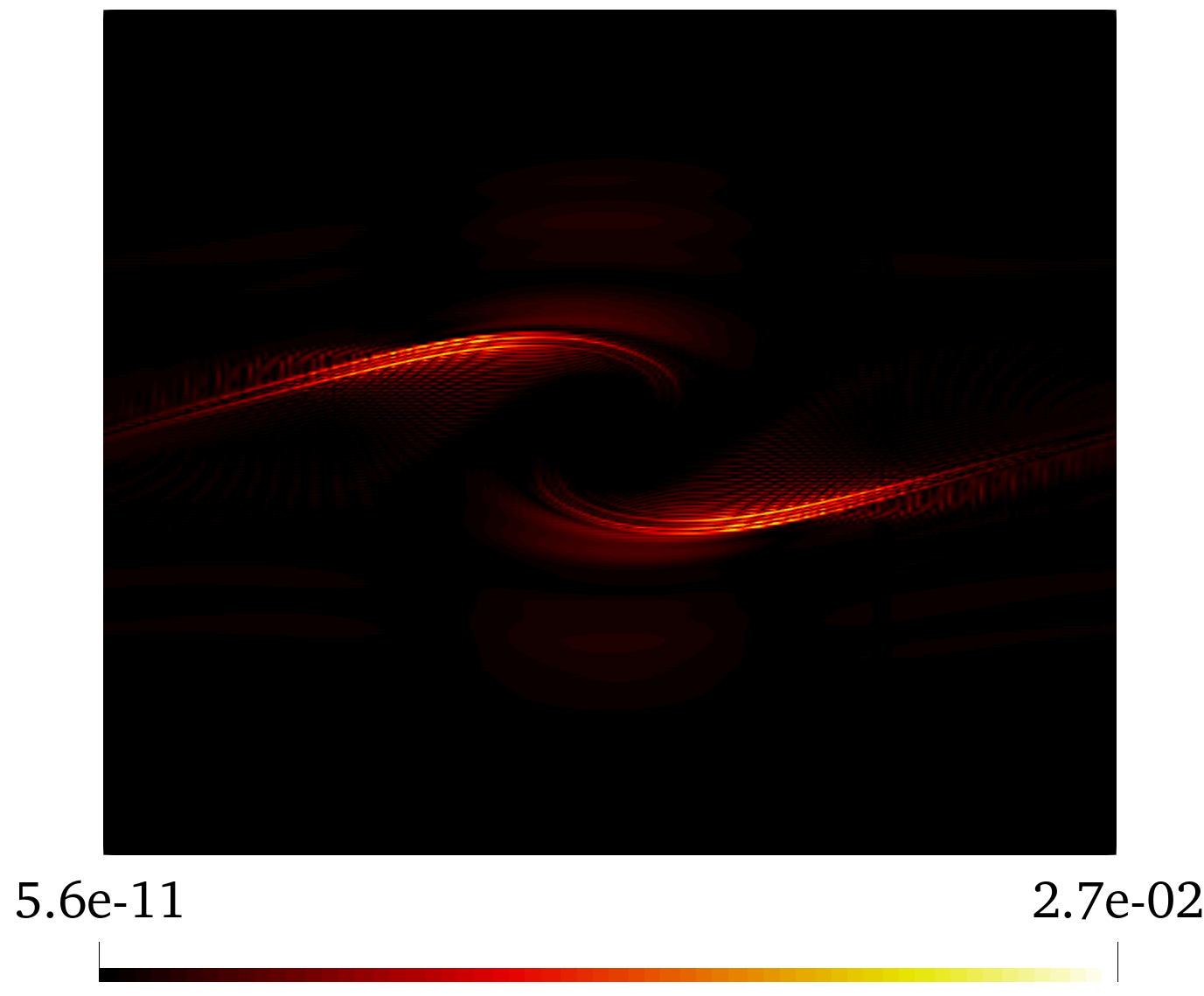}
    \caption{$I_h\big(R^n_{1, {2h}}\big)-R_{2,h}^n$}
  \end{subfigure}
  \begin{subfigure}[b]{0.32\textwidth}
    \includegraphics[width=\linewidth]{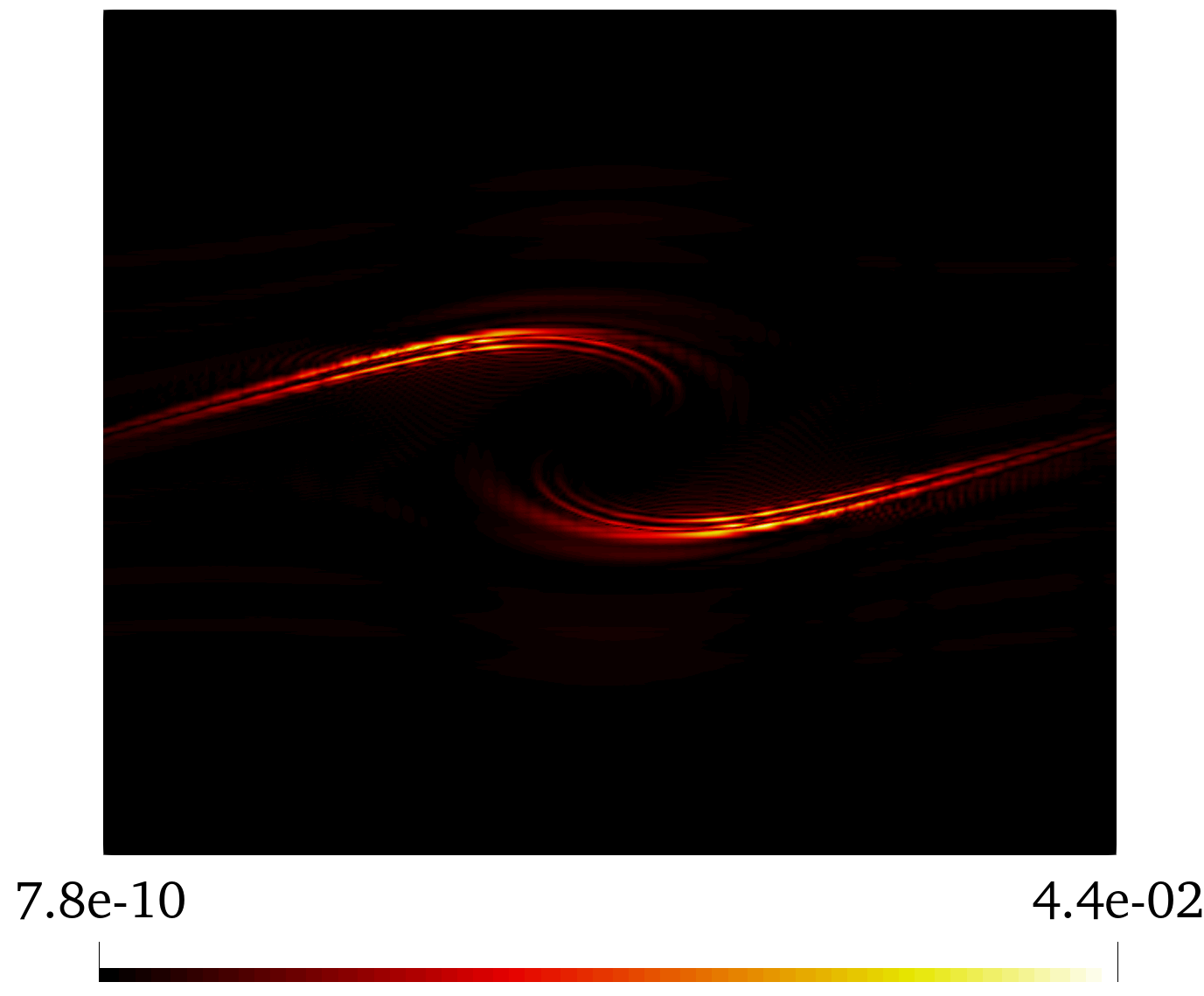}
    \caption{$I_h\big(R^n_{1, {4h}}\big)-R_{2,h}^n$}
  \end{subfigure}
  \begin{subfigure}[b]{0.32\textwidth}
    \includegraphics[width=\linewidth]{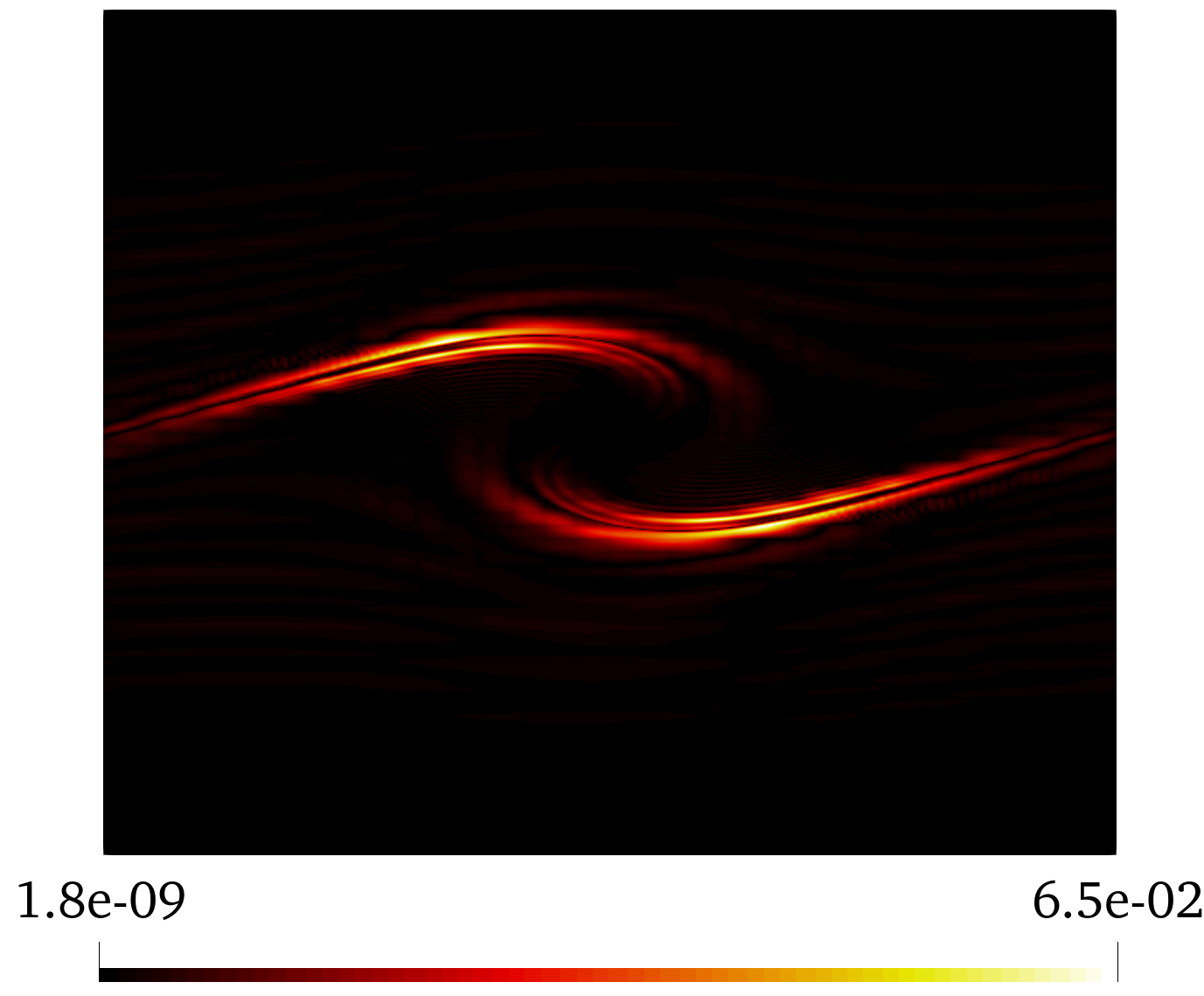}
    \caption{$I_h\big(R^n_{1, {8h}}\big)-R_{2,h}^n$}
  \end{subfigure}
  \caption{Two-stream instability: $R^n_h = I_h\big(R^n_{1, {qh}}\big)-R_{2,h}^n$ from \eqref{eq:final_residual} at $t = 20$ for $q=2,4,8$. The functions are plotted at the nodal points, and the mesh consists of $256\times512$ elements.}
  \label{fig:TS_residual}
\end{figure}

\subsection{1D2V Vlasov--Maxwell equations}
Now we apply our methods to the Vlasov--Maxwell equations in the so-called 1D2V phase space. We consider the streaming Weibel instability, focusing on the evolution of the electromagnetic energy and its characteristics, such as the growth rate. Relatively small CFL numbers are used in order to accurately capture these dynamics.

We now consider the streaming Weibel instability and take the initial data
\begin{equation}\notag
  \begin{aligned}
    f(x, v_1, v_2, 0) =& \frac{1}{2\pi\sigma^2}{\rm exp}\left(-\frac{v_1^2}{2\sigma^2}\right)\left(\delta{\rm exp}\left(-\frac{(v_2-v_{0,1})^2}{2\sigma^2}\right)\right.\\
                         &\left.+(1-\delta){\rm exp}\left(-\frac{(v_2-v_{0,2})^2}{2\sigma^2}\right)\right),\\
    B_3(x, 0) = &10^{-3}{\rm sin}(k x),\\
    E_2(x, 0) = &0,
  \end{aligned}
\end{equation}
and $E_1(x, 0)$ is computed by solving the Poisson equation.
The parameters are: $\sigma = 0.1/\sqrt{2}$, $k = 0.2$, $v_{0,1}=0.5$, $v_{0,2}=-0.1$ and $\delta=1/6$. The phase space is $\Omega=[0,2\pi/k]\times[-5\sigma,5\sigma]\times[-1.2,1.2]$. The CFL number is set to  {0.2}.

In Figure~\ref{fig:SWI}, we compare the results obtained with and without divergence cleaning. In both cases, the solutions show good agreement with the growth rate  {0.03} reported in \cite{Kraus_Kormann_Morrison_Sonnendruecker_2017}. The corresponding Gauss’s law errors are shown in Figure~\ref{fig:SWI_error}, where we observe that the divergence-cleaning technique significantly reduces the constraint violation. Hence, we see that the divergence-cleaning method proposed in this manuscript restores the divergence constraint on $\pmb{E}_h$ without destroying the results describing the electromagnetic fields. 
We also observe that, even with divergence cleaning, a small drift persists in the Gauss error. This drift arises from the numerical solution of the Poisson equation \eqref{eq:dPhin} using finite element methods, which introduces a small error in the divergence-cleaning step. This error accumulates over time and leads to the drift observed in Figure~\ref{fig:SWI_error}.

\begin{figure}[htbp]
  \centering
  \begin{subfigure}[b]{0.45\textwidth}
    \includegraphics[width=\linewidth]{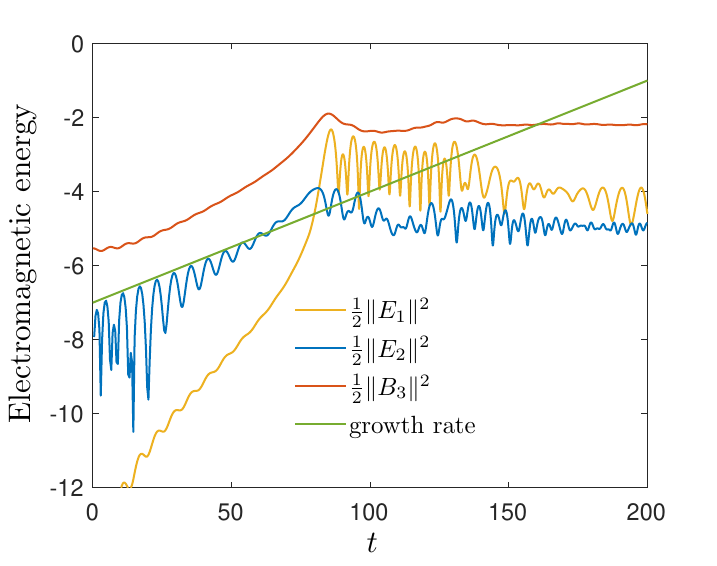}
    \caption{With divergence cleaning}
  \end{subfigure}~
  \begin{subfigure}[b]{0.45\textwidth}
    \includegraphics[width=\linewidth]{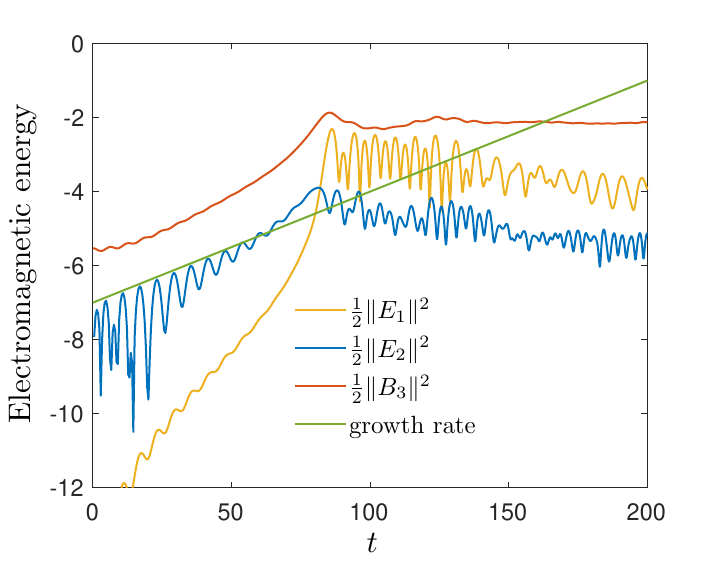}
    \caption{Without divergence cleaning}
  \end{subfigure}
  \caption{Streaming Weibel instability: the two electric and the magnetic energy together with the analytic growth rate. The mesh consists of $32\times64^2$ elements.}
  \label{fig:SWI}
\end{figure}
\begin{figure}[htbp]
  \centering
  \begin{subfigure}[b]{0.45\textwidth}
    \includegraphics[width=\linewidth]{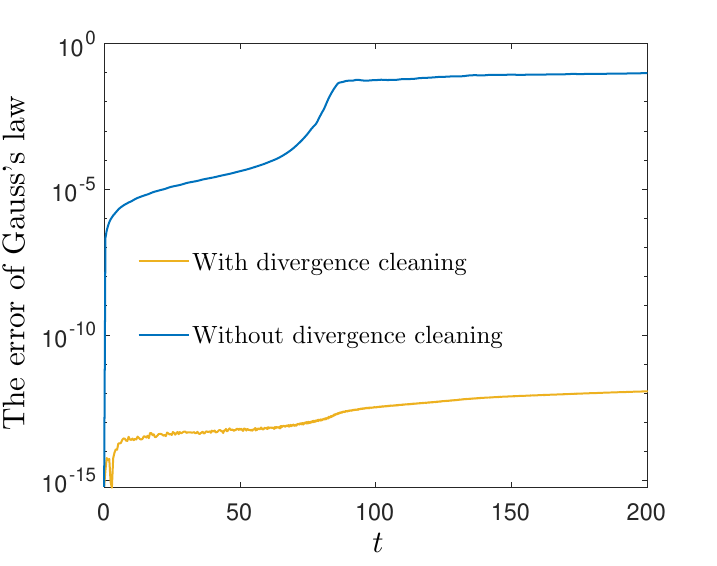}
  \end{subfigure}
  \caption{Streaming Weibel instability: errors in Gauss's law, with and without divergence cleaning.}
  \label{fig:SWI_error}
\end{figure}

\subsection{2D2V \mred{diocotron} instability}
Finally, we apply our methods to the 2D2V diocotron instability. We consider the Vlasov–Poisson equation with an external magnetic field $\pmb{B}_{\rm ext}$:
\begin{equation}
  {\partial_t f}+\pmb{v}\cdot\nabla_{\pmb{x}}f+\left(\pmb{E}+\pmb{v}\times\pmb{B}_{\rm ext}\right)\cdot\nabla_{\pmb{{v}}}f=0.\notag
\end{equation}
Taking the same initial data as our previous work \cite{MR4945433}:
\begin{equation}
  f(\pmb{x},\pmb{v},0)=\frac{\rho_0(\pmb{x})}{2\pi}\exp\left(-\frac{v_1^2+v_2^2}{2}\right),\notag
\end{equation}
where
\begin{equation}
  \rho_0(\pmb{x})=\left\{
    \begin{aligned}
      &(1+\alpha{\rm cos}(k\theta))\exp(-4(\Vert\pmb{x}\Vert-6.5)^2),&\qquad &{\rm if}\ 5\leq\Vert\pmb{x}\Vert\leq8,\\
      &0,&\qquad &{\rm otherwise},
    \end{aligned}
  \right.\notag
\end{equation}
where $k = 6$, $\theta = {\rm atan}(x_2/x_1)$, and $\alpha = 0.2$. The phase space is $\Omega:=\Omega_{\pmb{x}}\times\Omega_{\pmb{v}}$, where $\Omega_{\pmb{x}}:=\{\pmb{x}\in\polR^2:\Vert\pmb{x}\Vert\leq12\}$ and $\Omega_{\pmb{v}}:=[-5,5]^2$. In this simulation, we use $\pmb{B}_{\rm ext}=(0,0,1)$; moreover, the electric field is obtained by solving
  \begin{equation}
    \pmb{E}=-\nabla_{\pmb{x}}\Phi,\qquad-\nabla_{\pmb{x}}^2\Phi=\int_{\pmb{\Omega_{\pmb{v}}}}f\ {\rm d}\pmb{v}.
  \end{equation}

In phase space, we use 10000 $\polP_1$ elements in the physical domain and $20^2$ $\polQ_1$ elements in the velocity domain. When computing the residual, $\tilde{\calV}_h$ consists of $2500\ \polP_1\times10^2\ \polQ_1$ elements. Figure~\ref{fig:DI} shows the distribution of the charge density at $t=0$ and $1$. The distribution of charge density closely resembles the result reported in \cite[Fig. 9]{MR4945433}. Throughout the simulation, however, the charge density remains strictly positive, indicating that the positivity-preserving scheme successfully maintains non-negativity of the distribution function even in higher-dimensional settings.

\begin{figure}[htbp] 
  \centering
  \begin{subfigure}[b]{0.4\textwidth}
    \includegraphics[width=\linewidth]{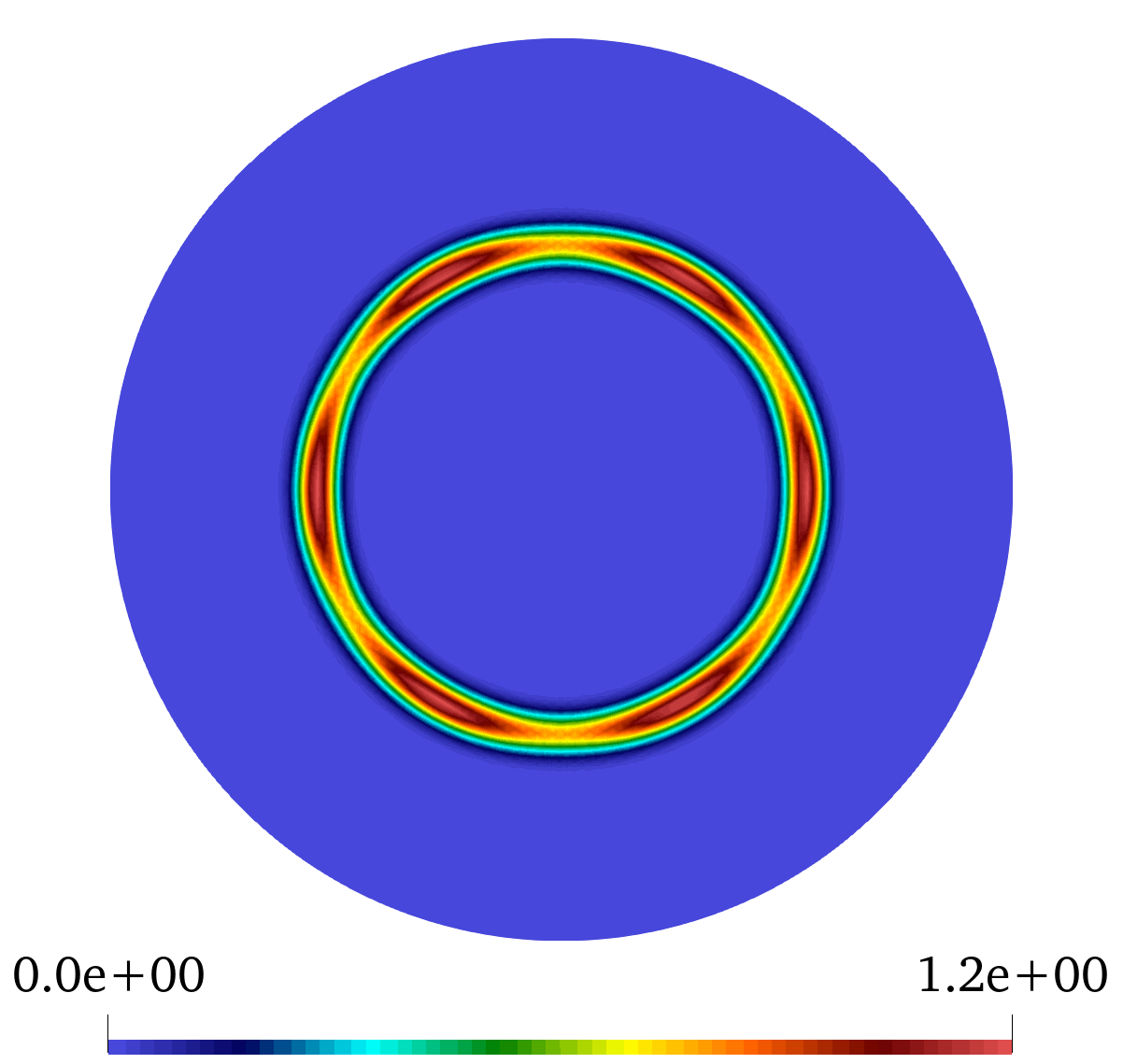}
    \caption{$t=0$}
  \end{subfigure}\quad
  \begin{subfigure}[b]{0.4\textwidth}
    \includegraphics[width=\linewidth]{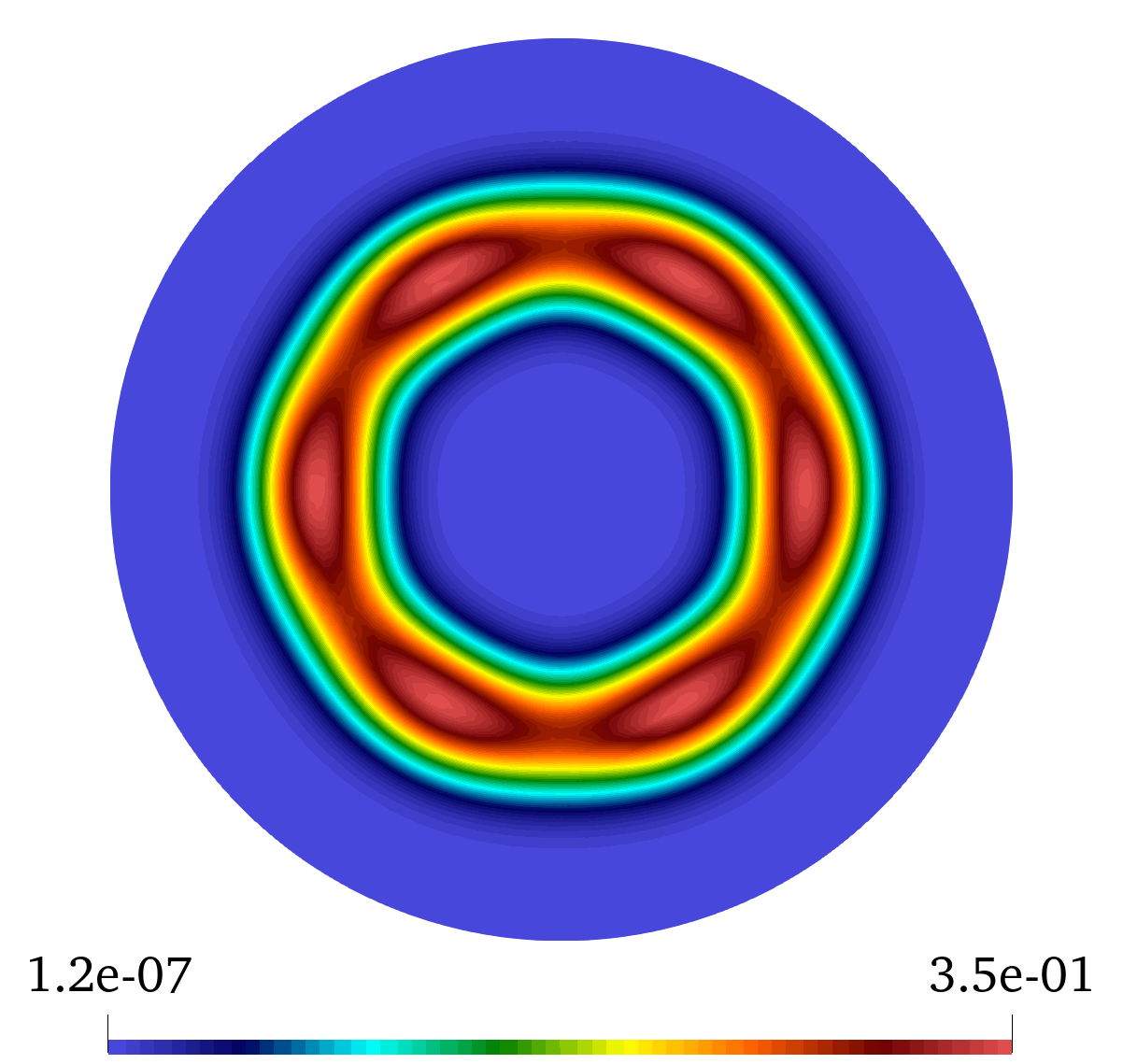}
    \caption{$t=1$}
  \end{subfigure}
  \caption{\mred{Diocotron} instability: $\rho_h$ at $t = 0$ and $1$. The functions are plotted at the nodal points, and the mesh consists of $10000\ \polP_1\times20^2\ \polQ_1$ elements.}
  \label{fig:DI}
\end{figure}

\section{Conclusion}\label{section:conclu}
In this work, we have introduced a high-order po\-si\-ti\-vi\-ty-pr\-es\-er\-vi\-ng numerical scheme for the Vlasov equations based on a continuous finite element discretization. The method combines a low-order scheme that satisfies a discrete maximum principle with a high-order scheme through a convex limiting strategy. Numerical results demonstrate that the proposed scheme preserves the positivity of both the distribution function and the charge density, and achieves second-order accuracy for smooth solutions when first-order polynomial approximations are employed.

Although the method has been successfully applied to four-dimensional problems, it becomes computationally expensive for higher polynomial degrees and for full six-dimensional simulations. Ongoing work aims to address these challenges, and the results will be reported in future publications.

\section*{Acknowledgments}
Some computations were performed on the UPPMAX supercomputing resources under project UPPMAX 2025/2-269.

\bibliographystyle{unsrturl}
\bibliography{ref_arxiv_bibtex}

\end{document}